\numberwithin{equation}{section}
\newtheorem{thm}{Theorem}[section]
\newtheorem{cor}[thm]{Corollary}
\newtheorem{lem}[thm]{Lemma}
\newtheorem{prop}[thm]{Proposition}
\theoremstyle{remark}
\newtheorem{rem}[thm]{\bf Remark}
\newcommand{\eps}{\varepsilon}
\renewcommand{\epsilon}{\varepsilon}
\newcommand{\N}{\mathbb{N}}
\newcommand{\R}{\mathbb{R}}
\newcommand{\s}{\mathcal{S}}
\title[Weak perturbations of the p--Laplacian]{Weak perturbations of the p--Laplacian}
\author{Tomas Ekholm}
\address{Tomas Ekholm, Department of Mathematics,
Royal Institute of Technology, S-100 44 Stockholm, Sweden}
\email{tomase@math.kth.se}
\author{Rupert L. Frank}
\address{Rupert L. Frank, Mathematics 253-37, Caltech, Pasadena, CA 91125, USA}
\email{rlfrank@caltech.edu}
\author{Hynek Kova\v r\'{\i}k}
\address{Hynek Kova\v r\'{\i}k, DICATAM, Sezione di Matematica, Universit\`a degli studi di Brescia,
Via Branze, 38 - 25123  Brescia, Italy}
\email{hynek.kovarik@ing.unibs.it}
\begin{document}

\begin{abstract}
We consider the p-Laplacian in $\R^d$ perturbed by a weakly coupled potential. We calculate the asymptotic expansions of the lowest eigenvalue of such an operator in the weak coupling limit separately for $p>d $ and $p=d$ and discuss the connection with Sobolev interpolation inequalities. 
\end{abstract}

\keywords{p-Laplacian, weak coupling, Sobolev inequality}

\thanks{\copyright\, 2013 by the authors. This paper may be
reproduced, in its entirety, for non-commercial purposes.}

\maketitle

 {\bf AMS Mathematics Subject Classification:}  49R05, 35P30 \\

{\bf  Keywords:} p-Laplacian, weak coupling, Sobolev inequalities \\


\section{Introduction}

In this paper we consider the functional 
\begin{equation} \label{p-laplace}
Q_V [u] = \int_{\R^d}\, \left(|\nabla u|^p- V\, |u|^p\right)\, dx, 
\qquad u\in W^{1,p}(\R^d), \quad p >1,
\end{equation}
with a given function $V:\R^d\to \R$ which is assumed to vanish at infinity in a sense to be made precise. We are interested in the minimization problem 
\begin{equation} \label{var-problem}
\lambda(V) =  \inf_{u\in W^{1,p}(\R^d)} \frac{Q_V [u]}{ \int_{\R^d}\,  |u|^p\, dx } \,.
\end{equation}
If \eqref{var-problem} admits a minimizer $u$, then the latter satisfies in the weak sense the non-linear eigenvalue equation
\begin{equation} \label{eq:groundstate}
-\Delta_p(u) - V |u|^{p-2}\, u = \lambda(V)\, |u|^{p-2}\, u \,, 
\end{equation}
where $-\Delta_p(u) := -\nabla \cdot(|\nabla u|^{p-2}\, \nabla u)$ is the $p$-Laplacian. Equation \eqref{eq:groundstate} is a particular case of a quasilinear differential problem and we refer to the monographs \cite{lu,ps2} and to \cite{s,s2, tr} for the general theory of such equations. The $p$-Laplacian equation with a zero-th order term $V$ has attracted particular attention. Existence of positive solutions to the equation $-\Delta_p(u) = V |u|^{p-2}\, u$  and related regularity questions were studied in \cite{ps, pt2, tt, to, pt}. For the discussion of maximum and comparison principles and positive Liouville theorems, see \cite{gs, ptt}.

\smallskip

In the present paper we are going to study the behaviour of $\lambda(\alpha V)$ for small values of $\alpha$. It is not difficult to see that $\lambda(\alpha V)\to 0$ as $\alpha\to 0$ for all sufficiently regular and decaying $V$. Our goal here is to find the correct asymptotic order and the correct asymptotic coefficient. It turns out that the asymptotic order depends essentially on the relation between the values of the exponent $p$ and the dimension $d$. If $p < d$, then by the Hardy inequality \cite{ok} we have
$$
\int_{\R^d} |\nabla u|^p\, dx \, \geq \Big(\frac{d-p}{p}\Big)^p \int_{\R^d} \frac{|u|^p}{|x|^p}\, dx, \qquad u\in W^{1,p}(\R^d), \quad d>p \,.
$$
Therefore, if $|V(x)| \leq C\, |x|^{-p}$ for some $C>0$, then $\lambda(\alpha V)=0$ for all $\alpha$ small enough. 
However, if $p \geq d$ and $\int_{\R^d}  V >0$, then we have $\lambda(\alpha V)<0$ for any $\alpha>0$. The latter is easily verified by a suitable choice of test functions. Moreover, if $V$ is bounded and compactly supported, then  $\lambda(\alpha V)<0$ for any $\alpha>0$  even when $\int_{\R^d} V =0$, see \cite[Prop. 4.5]{pt}. Consequently, we will always assume that $p\geq p$.

\smallskip

The question about the asymptotic behavior of $\lambda(\alpha V)$ for small $\alpha$ was intensively studied in the linear case $p=2$ (see, e.g., \cite{bgs,kl77,ks,si}), where equation \eqref{eq:groundstate} defines the ground state energy of the Schr\"odinger operator $-\Delta - V$. In particular, it turns out that for sufficiently fast decaying $V$ we have
\begin{equation} \label{eq:p=2}
\sqrt{-\lambda(\alpha V)} \, = \, \frac 12\ \alpha\! \int_{\R} V\, dx - c\, \alpha^2 + o(\alpha^2), \quad \alpha\to 0, \qquad d=1, \ p=2, 
\end{equation}
with an explicit constant $c$ depending on $V$, see \cite{si}. The proof of  \eqref{eq:p=2} is based on the Birman-Schwinger principle and on the explicit knowledge of the unperturbed Green function. With suitable modifications, this method was applied also to Schr\"odinger operators with long-range potentials, \cite{bgs,kl79}, and even to higher order and fractional Schr\"odinger operators \cite{az1,az2,ha}. 

\smallskip

Much less is known about the non-linear case $p\neq 2$ where the operator-theoretic methods developed  for $p=2$ cannot be used.  We will therefore apply a different, purely variational technique which allows us to analyze the asymptotic behaviour of $\lambda(\alpha V)$ for all $p>1$. A similar variational approach has already been used in a linear problem in \cite{fmv}, but here we take it much further into the quasi-linear realm (where, for instance the symmetry reduction that we crucial in \cite{fmv} is no longer available).

We will present our main results separately for $p>d$, see Theorem \ref{thm-main-non-crit}, and for $p=d$, see Theorem \ref{thm-main-crit}. In the case $p>d$ we shall show, in particular, that there is a close relation between the asymptotic behaviour of $\lambda(\alpha V)$ and the Sobolev interpolation inequality (see, e.g., \cite[Thm 5.9]{a})
\begin{equation} \label{sob}
\| u \|_\infty^p \leq\, \s_{d,p} \, \| \nabla u \|_p^{d}\, \,
\| u \|_p^{p- d}\,, \qquad u \in W^{1,p}(\R^d)\,, \quad d< p\,.
\end{equation}
By convention $\s_{d,p}$ will always denote the optimal (that is, smallest possible) constant in \eqref{sob}. On one hand, the constant $\s_{d,p}$ enters into the asymptotic coefficient in the expansion of $\lambda(\alpha V)$, see equation \eqref{limit-1}.  On the other hand, minimizers of problem \eqref{var-problem}, when suitably rescaled and normalised, converge (up to a subsequence) locally uniformly to a minimizer of the Sobolev inequality \eqref{sob} as $\alpha\to 0$, see Proposition \ref{prop:minimizers}. 

The case $p=d$ is much more delicate and requires (slightly) more regularity of the potential $V$ since functions in $W^{1,d}(\R^d)$, which appear in \eqref{var-problem}, are not necessarily bounded. While the case $p>d$ can be dealt with by energy methods (i.e. on the $W^{1,p}(\R^d)$ level of regularity), heavier PDE technics (Harnack's inequality, H\"older continuity bounds) are necessary to deal with $p=d$. The subtly of the case $p=d$ can also be seen in the asymptotic order: while $\lambda(\alpha V)$ vanishes algebraically as $\alpha\to 0$ for $p>d$, it vanishes exponentially fast for $p=d$, see equation \eqref{eq:log}. 

As we shall see, the asymptotic coefficient will depend on $V$ only through $\int_{\R^d} V\,dx$. We emphasize here that we do \emph{not} impose a sign condition on $V$. Thus, the positive and the negative parts of $V$ contribute both to the asymptotic coefficient and there will be cancellations. This is one of main difficulties that we overcome. In fact, if $V$ is non-negative, then the proof is considerably simpler.

A common feature of both Theorems \ref{thm-main-non-crit} and Theorem \ref{thm-main-crit} is that their proofs rely, among other things, on the fact that  minimizers $u_\alpha$ of \eqref{var-problem}, suitably normalized, converge locally uniformly to a constant. While in the case $d< p$ this follows from Morrey's Sobolev inequality and energy considerations, for $d=p$ we have to employ a regularity argument related to the H\"older continuity of  $u_\alpha$, see Lemma \ref{lem-hoelder}, with explicit dependence on the coefficients of the equation.


\section{Main results}

Our main results describe the asymptotics of the infimum $\lambda(\alpha V)$ of the functional $Q_{\alpha V}[u]$ as $\alpha\to 0$, see \eqref{p-laplace} and \eqref{var-problem}. Our first theorem concerns the subcritical case $p>d$.

\begin{thm}\label{thm-main-non-crit}
Let $p>d\geq 1$. Let $V\in L^1(\R^d)$ be such that $\int_{\R^d} V(x)\, dx > 0$. Then
\begin{equation} \label{limit-1}
\lim_{\alpha\to 0+}\, \alpha^{- \frac{p}{p-d}}\ \lambda(\alpha V) \, = \, 
- \frac{p-d}{p}\ \left(\frac dp\right)^{\frac{d}{p-d}}\ 
\left( \s_{d,p} \int_{\R^d} V(x)\, dx\right)^{\frac{p}{p-d}} \,,
\end{equation}
where $\s_{d,p}$ is the sharp constant in the Sobolev inequality \eqref{sob}.
\end{thm}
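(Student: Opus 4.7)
The plan is to establish matching upper and lower bounds of order $\alpha^{p/(p-d)}$ for $\lambda(\alpha V)$, both coming from essentially the same one-variable minimization tied to the Sobolev inequality \eqref{sob}.

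\emph{Upper bound.} For each $\eta>0$, choose a near-extremizer $w\in W^{1,p}(\R^d)$ of \eqref{sob} normalized so that $w(0)=\|w\|_\infty=1$ and $\|\nabla w\|_p^d\|w\|_p^{p-d}\le(1+\eta)/\s_{d,p}$. For the trial function $u_R(x)=w(x/R)$ one has $\|\nabla u_R\|_p^p=R^{d-p}\|\nabla w\|_p^p$, $\|u_R\|_p^p=R^d\|w\|_p^p$, and $\int V|u_R|^p\to\int V$ as $R\to\infty$ by dominated convergence (since $V\in L^1$ and $|w(x/R)|^p\to 1$ pointwise). Plugging into the Rayleigh quotient and minimizing in $R$ (the optimal $R$ scales like $\alpha^{-1/(p-d)}\to\infty$), then letting $\eta\to 0$, yields the matching upper bound.

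\emph{Lower bound.} Take a near-minimizer $u_\alpha$ normalized by $\|u_\alpha\|_\infty=1$, attained at some $x_\alpha\in\R^d$. Since $|u_\alpha|^p\le 1$ and $Q_{\alpha V}[u_\alpha]<0$, one has $\|\nabla u_\alpha\|_p^p\le\alpha\|V\|_1$, so $\|\nabla u_\alpha\|_p\to 0$. Morrey's embedding for $p>d$ then yields a uniform H\"older bound $|u_\alpha(x)-u_\alpha(y)|\le C\alpha^{1/p}|x-y|^{1-d/p}$, and the rescaling $\tilde u_\alpha(y):=u_\alpha(\alpha^{-1/(p-d)}y+x_\alpha)$ is uniformly bounded in $W^{1,p}(\R^d)$ with $\tilde u_\alpha(0)=\|\tilde u_\alpha\|_\infty=1$. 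By Arzel\`a--Ascoli one extracts a subsequence with $\tilde u_\alpha\to\tilde u_\star$ locally uniformly and weakly in $W^{1,p}(\R^d)$, where $\tilde u_\star(0)=\|\tilde u_\star\|_\infty=1$, and a further subsequence along which $x_\alpha\alpha^{1/(p-d)}\to c\in\R^d$. Dominated convergence gives $\int V|u_\alpha|^p\to|\tilde u_\star(-c)|^p\int V$, and combining weak lower semicontinuity of $\|\nabla\cdot\|_p$ with Sobolev applied to $\tilde u_\star$ shows that the lim inf of the Rayleigh quotient is bounded below by the infimum of $(B^p-M^p\int V)/A^p$ over $A,B>0$ with $B^dA^{p-d}\ge 1/\s_{d,p}$ and $M\in[0,1]$; an elementary computation, with the infimum attained at $M=1$, shows this equals the right-hand side of \eqref{limit-1}.

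\emph{Main obstacle.} The subtle point, and the reason sign-indefinite $V$ is genuinely harder than $V\ge 0$, is ensuring that $c$ is finite, i.e.\ that the peak $x_\alpha$ remains within $O(\alpha^{-1/(p-d)})$ of $\supp V$; this is precisely what allows the cancellations in $\int V|u_\alpha|^p$ to produce $\int V$ rather than the weaker $\|V\|_1$. The escape scenario $|x_\alpha|\alpha^{1/(p-d)}\to\infty$ is ruled out by contradiction: the uniform H\"older bound together with the $W^{1,p}(\R^d)$ control on $\tilde u_\alpha$ gives uniform decay at infinity, so in this scenario $u_\alpha\to 0$ pointwise on $\supp V$, forcing $\int V|u_\alpha|^p\to 0$ by dominated convergence, hence $\liminf\alpha^{-p/(p-d)}\lambda(\alpha V)\ge 0$, which contradicts the upper bound. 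A minor point is that $\tilde u_\alpha$ converges only weakly in $L^p$, so $\|\tilde u_\alpha\|_p^p$ may strictly exceed $\|\tilde u_\star\|_p^p$ in the limit; this is harmless because a larger denominator in a negative Rayleigh quotient only improves the lower bound. The case $V\ge 0$ avoids all of this: the pointwise bound $\int V|u|^p\le\|u\|_\infty^p\int V$ gives the lower bound directly via Sobolev, without the rescaling machinery.
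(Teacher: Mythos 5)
Your overall strategy is the same as the paper's, and the upper bound via scaled trial functions is essentially identical to Section 3.3 of the paper. For the lower bound the paper normalizes a minimizer $u_\alpha$ by $\|u_\alpha\|_p=1$, shows via Morrey's inequality that $\alpha^{-d/(p-d)}\int V(u_\alpha^p-u_\alpha(0)^p)\,dx\to0$, and then, after rescaling, observes that the resulting functional $\|\nabla f_\alpha\|_p^p-\int V\,dx\, f_\alpha(0)^p$ is, by definition, $\geq E(\int V\,dx)$. You instead normalize by $\|u_\alpha\|_\infty=1$, recentre the blow-up at the maximum point $x_\alpha$, and try to control where $x_\alpha$ sits. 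This detour creates two genuine problems. First, the claim that the uniform H\"older modulus together with $W^{1,p}$-boundedness of $\tilde u_\alpha$ forces ``uniform decay at infinity'' is false: a sequence built from two widely separated bumps of fixed profile is uniformly H\"older, has uniformly bounded $W^{1,p}$ norm and sup norm $1$, yet does not decay uniformly, so the contradiction you seek in the escape scenario is not established. Second, with your normalization $\|\tilde u_\alpha\|_p^p=\alpha^{d/(p-d)}\|u_\alpha\|_p^p$ is only bounded below (via Sobolev), not above, so the weak $W^{1,p}$ limit $\tilde u_\star$ need not exist, and the Sobolev constraint ``applied to $\tilde u_\star$'' is not available.

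Fortunately, neither the escape argument nor the weak limit is actually needed, and the rest of your outline already contains the fix. Morrey's inequality applied directly to $u_\alpha$ (not to $\tilde u_\alpha$) gives $|u_\alpha(x)-u_\alpha(0)|\leq C\alpha^{1/p}|x|^{1-d/p}\to0$ uniformly on compacts, so along a subsequence $u_\alpha\to M$ locally uniformly with $M=\lim u_\alpha(0)\in[0,1]$, and dominated convergence gives $\int V|u_\alpha|^p\to M^p\int V$ --- regardless of where the peak lies. Then apply the Sobolev inequality to each $\tilde u_\alpha$ itself, which gives the constraint $\|\nabla\tilde u_\alpha\|_p^d\|\tilde u_\alpha\|_p^{p-d}\geq1/\s_{d,p}$ for every $\alpha$, and note that the same constraint keeps $\|\tilde u_\alpha\|_p$ bounded away from zero so the $o(1)$ error in the numerator is harmless. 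The elementary infimum over $(A,B,M)$ you already wrote down then yields the lower bound, and the supremum over $M\in[0,1]$ is attained at $M=1$, so no separate argument is needed to rule out $M<1$ or a fleeing peak. With those repairs your proof is correct, though the paper's choice to centre at the origin and work with the quantity $E(v)$ avoids the whole issue from the start.
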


We also have a theorem that describes the asymptotics of the minimizers of the functional $Q_{\alpha V}[u]$; see Proposition \ref{prop:minimizers}.

In the endpoint case $d=p$ we have

\begin{thm}\label{thm-main-crit}
Let $p=d >1$. Suppose that $V\in L^q(\R^d)\cap L^1(\R^d)$ for some $q>1$ and that $\int_{\R^d} V(x)\,dx > 0$. Then 
\begin{equation} \label{eq:log}
\lim_{\alpha\to 0+}\, \alpha^{\frac{1}{d-1}}\,  \log \frac1{|\lambda(\alpha V)|} = d\ \omega_d^{\frac{1}{d-1}}\, \left(\int_{\R^d}  V(x)\, dx  \right)^{-\frac{1}{d-1}} \,,
\end{equation}
where $\omega_d$ denotes the surface area of the unit sphere in $\R^d$. 
\end{thm}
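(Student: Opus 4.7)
My plan is to prove matching inequalities $\limsup_{\alpha\to 0^+}\alpha^{1/(d-1)}\log(1/|\lambda(\alpha V)|)\le K$ and $\liminf_{\alpha\to 0^+}\alpha^{1/(d-1)}\log(1/|\lambda(\alpha V)|)\ge K$, where $K := d\,\omega_d^{1/(d-1)}\bigl(\int_{\R^d} V\bigr)^{-1/(d-1)}$ is the right-hand side of \eqref{eq:log}. The scaling $\log(1/|\lambda|)\asymp\alpha^{-1/(d-1)}$ reflects that the critical Sobolev exponent $p=d$ degenerates the algebraic rate of Theorem~\ref{thm-main-non-crit} into an exponentially small eigenvalue governed by the logarithmic $d$-capacity of annuli, in perfect analogy with Simon's 2D Schr\"odinger result.

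For the $\limsup$ inequality I would feed into the Rayleigh quotient the explicit logarithmic cutoff
\[
u_L(x) := \min\!\bigl(1,\, [L-\log|x|]_+/L\bigr),\qquad L>0,
\]
equal to $1$ on the unit ball, decaying log-linearly on $1\le|x|\le e^L$, and zero beyond. A direct polar-coordinate computation yields the sharp identity $\int_{\R^d}|\nabla u_L|^d\,dx = \omega_d L^{-(d-1)}$ and the asymptotic $\int_{\R^d} u_L^d\,dx \asymp e^{dL}L^{-d}$ as $L\to\infty$, while $\int V u_L^d\,dx \to \int V$ by dominated convergence (this is where $V\in L^1$ enters). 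Choosing $L = (1+\eta)(\omega_d/(\alpha\int V))^{1/(d-1)}$ balances the kinetic and potential terms: the numerator of $Q_{\alpha V}[u_L]$ becomes negative of order $\alpha$, the denominator is $\asymp e^{dL}L^{-d}$, and hence $|\lambda(\alpha V)|\gtrsim \alpha^{-1/(d-1)}\exp\bigl(-d(1+\eta)(\omega_d/(\alpha\int V))^{1/(d-1)}\bigr)$. Taking logarithms and sending $\eta\to 0$ gives $\limsup\le K$.

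For the $\liminf$ inequality let $u_\alpha$ be a minimizer of \eqref{var-problem} with potential $\alpha V$ (existence and $L^\infty$-boundedness both rely on $V\in L^q\cap L^1$), normalized so that $\|u_\alpha\|_\infty = 1$, attained at some $x_\alpha\in\R^d$. I would proceed in three steps. \emph{Step 1.} Via the quantitative H\"older continuity of Lemma~\ref{lem-hoelder}, with its explicit dependence on the coefficients of the equation, show that $u_\alpha\ge 1-\eta$ on a ball $B_\rho(x_\alpha)$ whose radius $\rho$ is large enough that $\supp V$ is essentially contained in it (after a translation that we may absorb); this yields $\alpha\int V u_\alpha^d \le (1+o(1))\alpha\int V$. \emph{Step 2.} From $u_\alpha\in L^d(\R^d)$ together with pointwise decay, extract a radius $R>\rho$ such that $u_\alpha\le\eta$ outside $B_R(x_\alpha)$; then $u_\alpha$ is a condenser function between levels $1-\eta$ and $\eta$, and the sharp logarithmic $d$-capacity inequality gives
\[
\int_{\R^d}|\nabla u_\alpha|^d\,dx \ge (1-2\eta)^d\,\frac{\omega_d}{(\log(R/\rho))^{d-1}}.
\]
\emph{Step 3.} Control $\|u_\alpha\|_d^d$ through the same log-profile (its bulk mass lives in the annular region $B_R\setminus B_\rho$ and scales like $e^{dL}L^{-d}$ with $L=\log(R/\rho)$, mirroring the trial-function computation). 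Inserting these into $-\lambda(\alpha V)\|u_\alpha\|_d^d = \alpha\int V u_\alpha^d - \int|\nabla u_\alpha|^d$ and optimizing forces $\log(R/\rho)\ge(1-O(\eta))(\omega_d/(\alpha\int V))^{1/(d-1)}$, which converts to $|\lambda(\alpha V)|\le\exp(-(1-O(\eta))K\alpha^{-1/(d-1)})$ and hence $\liminf\ge K$ as $\eta\to 0$.

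The principal obstacle is Step~1: converting the qualitative statement that \emph{minimizers, suitably normalized, converge locally uniformly to a constant} into a quantitative estimate compatible with the sharp constant $K$. This demands that Lemma~\ref{lem-hoelder} produce H\"older bounds whose dependence on $\alpha$ and on $\lambda(\alpha V)$ is explicit enough to let $\rho$ grow faster than any fixed scale associated with $V$ while keeping $u_\alpha$ close to $1$ on $B_\rho$. A secondary delicate point, entirely absent if $V\ge 0$, is that sign-changing $V$ requires \emph{pointwise} closeness of $u_\alpha$ to $1$ on $\supp V$, so that the asymptotic coefficient is the signed integral $\int V$ (with cancellations between positive and negative parts) rather than $\int|V|$; any merely averaged estimate would destroy these cancellations. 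Finally, the denominator $\|u_\alpha\|_d^d$ cannot be localized to a bounded set, as its mass is concentrated on the wide annular region where $u_\alpha$ is logarithmically decaying, and matching the leading exponential rate hinges on tracking this quantity through the same log-profile that drives the upper bound.
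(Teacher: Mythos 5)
Your $\limsup$ direction (the lower bound on $\lambda(\alpha V)$ in terms of a test function) matches the paper's Proposition \ref{lowerb-prop}: the same logarithmic cutoff, the same balancing of $\omega_d L^{1-d}$ against $\alpha\int V$, and the same conclusion. That part is sound.

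The $\liminf$ direction, however, has two genuine gaps.

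First, Step~1 as stated cannot work for general $V\in L^q\cap L^1$. You want to let $\rho\to\infty$ so that $B_\rho$ ``essentially contains'' $\supp V$, but Lemma~\ref{lem-hoelder} is only applicable when $W=\alpha V+\lambda(\alpha V)\leq 0$ outside $B_{5\rho}$; since $|\lambda(\alpha V)|/\alpha$ is exponentially small, this would force $V\leq$ (exponentially small) outside $B_{5\rho}$, which is not implied by $V\in L^q\cap L^1$. The paper circumvents this by keeping $\rho$ \emph{fixed}: it first proves the $\liminf$ bound for compactly supported $V$ (Proposition~\ref{lowerb-lemmagen}, where $V\equiv 0$ outside a fixed ball so $W\leq 0$ automatically there), separately handles nonnegative $V$ by rearrangement and an Alvino--Moser--Trudinger ODE argument (Proposition~\ref{lowerb-lemmapos}), and then passes to general $V$ via the convexity splitting $V=V_<+V_>$ with $V_<=V\chi_{\{|\cdot|<R\}}$, $V_>=V_+\chi_{\{|\cdot|\geq R\}}$ and the inequality $Q_{\alpha V}[u]\geq(1-\eps)Q_{(1-\eps)^{-1}\alpha V_<}[u]+\eps Q_{\eps^{-1}\alpha V_>}[u]$. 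Your proposal has no analogue of this decomposition, and without it Lemma~\ref{lem-hoelder} is not applicable.

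Second, Step~3 is both unjustified and unnecessary. You claim $\|u_\alpha\|_d^d\asymp e^{dL}L^{-d}$ with $L=\log(R/\rho)$ ``mirroring the trial-function computation,'' but there is no reason a minimizer should exhibit this precise logarithmic profile in the annulus, and the $\liminf$ conclusion would not survive if the minimizer had more or less mass there. The paper sidesteps $\|u_\alpha\|_d^d$ altogether: the capacity bound on $\chi u_\alpha$ (with a fresh cutoff $\chi$ supported in $B_R$) yields
\[
\lambda(\alpha V)\ \geq\ \frac{\omega_d\,(\log(R/\rho))^{1-d}-(1+\eps)\,\alpha\int V u_\alpha^d\,dx}{(1+\eps)\,\|u_\alpha\|_d^d}\ -\ \frac{c''}{\eps^{d-1}R^{d}}\,,
\]
where $R$ is a \emph{free} parameter; one then chooses $R=\rho\exp\bigl((\omega_d/((1+\eps)(1+C\alpha^{1/d})\alpha\int V))^{1/(d-1)}\bigr)$ to make the numerator of the first fraction nonnegative, so the unknown $\|u_\alpha\|_d^d$ drops out and the bound $-c''\eps^{1-d}R^{-d}$ alone produces the correct exponentially small lower bound. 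Your plan of determining $R$ from the decay level of $u_\alpha$ and then estimating $\|u_\alpha\|_d^d$ has no clear route to the sharp constant.

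A minor further point: your normalization $\|u_\alpha\|_\infty=1$ presumes the supremum is attained and requires tracking the point $x_\alpha$, which may drift; the paper instead normalizes $\inf_{B_\rho}u_\alpha=1$ (for fixed $\rho$), which is exactly what is needed both for the Harnack bound and for the capacity comparison.
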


\begin{rem}
Let us compare the assumptions on $V$ in Theorems \ref{thm-main-non-crit} and \ref{thm-main-crit}. If $p>d$ and $V_+ \notin L^1(\R^d)$, $V_-\in L^1(\R^d)$, then Theorem \ref{thm-main-non-crit} easily implies that
$$
\lim_{\alpha\to 0+}\, \alpha^{-\frac{p}{p-d}}\ \lambda(\alpha V) = - \infty \,.
$$
Thus, at least under the additional hypothesis $V_-\in L^1(\R^d)$, the condition $V_+\in L^1(\R^d)$ is necessary and sufficient for finite asymptotics of $\alpha^{-\frac{p}{p-d}}\ \lambda(\alpha V)$. This is not true for the asymptotics of $\alpha^{\frac{1}{d-1}}\,  \log |\lambda(\alpha V)|^{-1}$ in the case $p=d$, and this is the reason for the additional assumption $V\in L^q(\R^d)$ for some $q>1$. Indeed, we claim that there are $0\leq V\in L^1(\R^d)$ such that $\lambda(\alpha V)=-\infty$ for any $\alpha>0$. To see this, choose $\sigma\in (1,d)$ and consider $V(x) = |x|^{-d} |\log |x||^{-\sigma}$ for $|x|\leq e^{-1}$ and $V(x)= 0$ for $|x|> e^{-1}$. Then $\sigma>1$ implies $V\in L^1(\R^d)$. Since $\sigma<d$ we can choose a $\rho\in [(\sigma-1)/d,(d-1)/d)$ and define $u(x) = |\ln|x||^\rho \zeta(x)$, where the function $\zeta\in C_0^\infty(\R^d)$ equals one in a neighborhood of the origin. Then $\rho<(d-1)/d$ implies that $u\in W^{1,d}(\R^d)$, whereas $\rho\geq (\sigma-1)/d$ implies that $\int_{\R^d} V|u|^d \,dx = \infty$. Thus, $Q_{\alpha V}[u]=-\infty$ for any $\alpha>0$.  
\end{rem}

\begin{rem}
In the quadratic case $p=2$, Theorems \ref{thm-main-non-crit} and \ref{thm-main-crit} recover the asymptotics originally found in \cite{si} using a different, operator theoretic approach. Both \eqref{limit-1} and \eqref{eq:log} were originally proved in \cite{si} under more restictive conditions on $V$. For $d=1$ these restrictions were later removed in \cite[Sec.4]{kl77}; note also that according to Lemma \ref{sobminimizer} below we have $\s_{1,2}=1$ for $p=2$ and $d=1$.
\end{rem}

While our theorems give a complete answer in the case $V\in L^1(\R^d)$ (plus additional assumptions if $p=d$) with $\int_{\R^d} V\,dx>0$, the following questions, which we consider interesting, remain open:
\begin{enumerate}
\item What happens if $V\in L^1(\R^d)$ (plus some additional assumptions), but $\int_{\R^d} V\,dx=0$? For results in the case $p=2$, see \cite{si,kl77,bcez}.
\item What happens if $V\notin L^1(\R^d)$, but $V(x)= |x|^{-\sigma}(1+o(1))$ as $|x|\to\infty$ with $0<\sigma\leq d$?  For results in the case $p=2$, see \cite{kl79}.
\end{enumerate}

The proofs of Theorems \ref{thm-main-non-crit}  and \ref{thm-main-crit} are given in Sections \ref{sect:p<d} and \ref{sect:p=d} respectively.

\subsection*{Notation}  Given $r>0$ and a point $x\in\R^d$ we denote by $B(r,x)\subset\R^d$ the open ball with radius $r$ centred in $x$. If $x=0$, then we write $B_r$ instead of $B(r,0)$. Furthermore, given a set $\Omega \subset \R^d$ we denote by $\Omega^c$ its complement in $\R^d$.  The $L^q$ norm of a function $u$ in $\Omega$ will be denoted by $\|u\|_{L^q(\Omega)}$  if $\Omega\neq \R^d$ and by $\|u\|_q$ if $\Omega =\R^d$. 

\section{Case $d <  p$} \label{sect:p<d}

Before we proceed with the proof of Theorem \ref{thm-main-non-crit} we give some preliminary results concerning Sobolev inequality \eqref{sob} and the properties of the functional $Q_V [u]$. 

\subsection{Sobolev inequality}

We recall that $\s_{d,p}$ denotes the optimal constant in the Sobolev interpolation inequality \eqref{sob}. In this subsection we discuss a closely related (and, in fact, equivalent, as we shall show) minimization problem which depends on a parameter $v>0$ in addition to an exponent $q>d\geq 1$. We define
\begin{equation}
\label{eq:e}
E(v) = \inf_{\|u\|_p =1} \left( \|\nabla u\|_p^p - v |u(0)|^p \right).
\end{equation}
(Note that by the Sobolev embedding theorem any function in $W^{1,q}(\R^d)$, $q>d$, has a continuous representative and therefore $u(0)$ is unambiguously defined. The following lemma shows, in particular, that $E(v)>-\infty$.

\begin{lem}\label{soblemma}
Let $p>d\geq 1$ and $v>0$. Then
$$
E(v) = - \frac{p-d}{p}\, \left(\frac dp\right)^{\frac{d}{p-d}}\,
\left(\s_{d,p} v\right)^{\frac{p}{p-d}} \,.
$$
Moreover, the infimum is attained by a non-negative, symmetric decreasing function. Finally, any minimizing sequence is relatively compact in $W^{1,p}(\R^d)$.
\end{lem}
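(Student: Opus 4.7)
The plan is to combine the sharp Sobolev inequality \eqref{sob} (for a lower bound via one-variable calculus) with a rescaled near-extremizer of \eqref{sob} (for the matching upper bound), and to extract compactness of minimizing sequences from weak convergence in $W^{1,p}(\R^d)$ together with the uniform convexity of $L^p$.

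For the lower bound, I take $u$ with $\|u\|_p=1$ and apply \eqref{sob} to obtain $|u(0)|^p\le\|u\|_\infty^p\le\s_{d,p}\|\nabla u\|_p^d$, so that
\[
 \|\nabla u\|_p^p-v|u(0)|^p \ge t^p-v\s_{d,p}\,t^d \qquad\text{where } t=\|\nabla u\|_p.
\]
Elementary calculus locates the minimum of the right-hand side at $t_*=(dv\s_{d,p}/p)^{1/(p-d)}$, with value $-\tfrac{p-d}{p}v\s_{d,p}\,t_*^d$, which simplifies (using $1+d/(p-d)=p/(p-d)$) to the expression in the statement. For the matching upper bound, I pick an almost-extremizer $\phi$ of \eqref{sob} with $\|\phi\|_p=1$ and $\|\phi\|_\infty^p\ge(\s_{d,p}-\varepsilon)\|\nabla\phi\|_p^d$, pass to its symmetric decreasing rearrangement (which improves the Sobolev quotient by P\'olya--Szeg\H{o} and puts its maximum at $0$), and rescale via $u(x)=\lambda^{d/p}\phi(\lambda x)$ with $\lambda=t_*/\|\nabla\phi\|_p$. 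Then $\|u\|_p=1$, $\|\nabla u\|_p=t_*$, $|u(0)|^p\ge(\s_{d,p}-\varepsilon)t_*^d$, and letting $\varepsilon\to 0$ matches the lower bound; in particular $E(v)<0$.

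For compactness I take any minimizing sequence $u_n$ with $\|u_n\|_p=1$; the lower-bound inequality (together with $s^p-v\s_{d,p}s^d\to+\infty$) forces $\|\nabla u_n\|_p$ to be bounded, so $u_n$ is bounded in $W^{1,p}(\R^d)$. After extracting a subsequence, Morrey embedding ($p>d$) together with Arzel\`a--Ascoli yields $u_n\to u$ locally uniformly and $u_n\rightharpoonup u$ in $W^{1,p}$; extracting further gives $\|\nabla u_n\|_p^p\to L$ for some $L$. Then $E(v)=L-v|u(0)|^p$, while weak lower semicontinuity gives $Q(u):=\|\nabla u\|_p^p-v|u(0)|^p\le E(v)$, and the $p$-homogeneity of $Q$ combined with the definition of $E(v)$ yields $Q(u)\ge\|u\|_p^p\,E(v)$. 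Together with $\|u\|_p\le 1$ (Fatou) and $E(v)<0$, these inequalities force $\|u\|_p=1$, $Q(u)=E(v)$, and hence $\|\nabla u\|_p^p=L$. The uniform convexity of $L^p(\R^d)$ for $1<p<\infty$ then promotes $u_n\rightharpoonup u$ and $\nabla u_n\rightharpoonup\nabla u$ (now both with matching norm convergence) to strong convergence, giving $u_n\to u$ in $W^{1,p}(\R^d)$. Since replacing $u_n$ by $|u_n|^*$ only decreases $Q$, running the same argument on $\{|u_n|^*\}$ produces a non-negative, symmetric decreasing minimizer.

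The main obstacle is this final weak-to-strong step: ruling out loss of mass at infinity ($\|u\|_p=1$) while simultaneously forcing the gradient-norm identity $\|\nabla u\|_p=\lim\|\nabla u_n\|_p$. Both follow from the combination of $p$-homogeneity of $Q$ and the negativity $E(v)<0$ established in the upper bound step.
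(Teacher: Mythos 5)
Your lower bound and the compactness argument are essentially the same as the paper's: the one–variable minimization of $t^p - v\s_{d,p}t^d$ after applying \eqref{sob}, and the chain ``bounded minimizing sequence $\Rightarrow$ weak limit $u$ with $u_n(0)\to u(0)$, $\|u\|_p\le 1$, and $Q(u)\ge E(v)\|u\|_p^p$, whence $\|u\|_p=1$ and norm convergence of gradients'' leading to strong convergence. (The paper invokes Rellich--Kondrashov for $u_n(0)\to u(0)$; your Morrey/Arzel\`a--Ascoli route is equivalent for $p>d$ and equally fine; both ultimately rely on the Radon--Riesz property of $L^p$ for the weak-to-strong upgrade.) The rearrangement step to produce a non-negative, symmetric decreasing minimizer is also the same, via $\|u^*\|_p=\|u\|_p$, $\|\nabla u^*\|_p\le\|\nabla u\|_p$, $|u^*(0)|\ge |u(0)|$.

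Where you genuinely diverge is the upper bound $E(v)\le -\tfrac{p-d}{p}(\tfrac dp)^{d/(p-d)}(\s_{d,p}v)^{p/(p-d)}$. You construct explicit near-minimizers: take an $\eps$-near-extremizer $\phi$ of \eqref{sob}, symmetrize so the sup sits at the origin, dilate so that $\|\nabla u\|_p=t_*$, and let $\eps\to 0$. The paper instead argues by a duality/scaling device: it first proves the scaling law $E(v)=E(1)v^{p/(p-d)}$, then shows $E(v)<0$, then for an arbitrary $u\in W^{1,p}$ translates its maximum to the origin and reads off $\|\nabla u\|_p^p\ge v\|u\|_\infty^p+E(1)v^{p/(p-d)}\|u\|_p^p$, optimizes in $v$ to obtain a Sobolev inequality with constant expressed through $|E(1)|$, and concludes $\s_{d,p}\le |E(1)|^{(p-d)/p}(\cdots)$, closing the loop with the lower bound. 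Your construction is more direct and elementary; the paper's argument has the side benefit of making the equivalence between $E(v)$ and the Sobolev constant explicit and of yielding the scaling law $E(v)=E(1)v^{p/(p-d)}$ as a standalone fact (which it then reuses). Both are complete and correct; neither requires the a priori existence of a Sobolev extremizer.

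One cosmetic remark: when you write ``extracting further gives $\|\nabla u_n\|_p^p\to L$'' and then ``$E(v)=L-v|u(0)|^p$'', you are implicitly also extracting so that $|u_n(0)|^p$ converges (which it does, to $|u(0)|^p$, by locally uniform convergence), and using that the full expression $\|\nabla u_n\|_p^p-v|u_n(0)|^p$ tends to $E(v)$ along the original minimizing sequence. That is fine, but it is worth stating that the identification of the two limits rests on $u_n(0)\to u(0)$, which is the step where $p>d$ (via Morrey or Rellich--Kondrashov) is used and which genuinely fails for $p\le d$.
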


We include a proof of this lemma for the sake of completeness.

\begin{proof}
By the Sobolev inequality \eqref{sob} we have
$$
|u(0)|^p \leq \|u\|_\infty^p \leq \mathcal S_{d,p} \|\nabla u \|_p^d \|u\|_p^{p-d}
$$
and, therefore, if $\|u\|_p=1$,
\begin{align*}
\|\nabla u\|_p^p - v |u(0)|^p
\geq \|\nabla u\|_p^p - v \mathcal S_{d,p} \|\nabla u \|_p^d
& \geq \inf_{X\geq 0} \left( X^p - v \mathcal S_{d,p} X^d \right) \\
& = - \frac{p-d}{p}\, \left(\frac dp\right)^{\frac{d}{p-d}}\,
\left(\s_{d,p} v\right)^{\frac{p}{p-d}} \,.
\end{align*}
This shows that $E(v) \geq -\frac{p-d}{p}\, \left(\frac dp\right)^{\frac{d}{p-d}}\, \left(\s_{d,p} v\right)^{\frac{p}{p-d}}$. In particular, $E(v)>-\infty$.

To prove the reverse inequality, we first note that, by scaling,
$$
E(v) = E(1) \,v^{\frac{p}{p-d}} \,.
$$
(To see this, write $u$ in the form $u(x) = v^{\frac{d}{p(p-d)}} w(v^{\frac{1}{p-d}} x)$.) We note also that $E(v)<0$. (Indeed, for a fixed $u\in W^{1,p}(\R^d)$ with $\|u\|_p=1$ and $u(0)\neq 0$ we clearly have $\|\nabla u\|_p^p - v |u(0)|^p\to-\infty$ as $v\to\infty$ and therefore $E(v)<0$ for all sufficiently large $v$. By the scaling law, this implies that $E(v)<0$ for any $v$.)

 Now let $u\in W^{1,p}(\R^d)$. Then, by the Sobolev embedding theorem $u$ can be assumed to be continuous and vanishing at infinity, so there is an $a\in\R^d$ such that $|u(a)|=\|u\|_\infty$. Let $\tilde u(x) = u(x+a)/\|u\|_p$. Then, by the definition of $E(v)$,
$$
\|\nabla \tilde u\|^p_p - v |\tilde u(0)|^p \geq E(v) \,,
$$
i.e.,
$$
\|\nabla u\|_p^p  \geq v \|u\|_\infty^p + E(v) \|u\|_p^p
=  v\|u\|_\infty^p + E(1)\, \,v^{\frac{p}{p-d}} \|u\|_p^p \,.
$$
Since this is true for any $v>0$ we have
\begin{align*}
\|\nabla u\|_p^p  \geq v \|u\|_\infty^p + E(v) \|u\|_p^p
& \geq \sup_{v>0} \left( v\|u\|_\infty^p + E(1)\, \,v^{\frac{p}{p-d}} \|u\|_p^p \right) \\
& = \|u\|_\infty^{\frac{p^2}{d}} \|u\|_p^{-\frac{p(p-d)}{d}} |E(1)|^{-\frac{p-d}{d}} \left( \frac{p-d}{p} \right)^\frac{p-d}{d} \frac{d}{p} \,.
\end{align*}
This proves that $\s_{d,p} \leq |E(1)|^{\frac{p-d}{p}} \left( \frac{p-d}{p} \right)^{-\frac{p-d}{p}} \left(\frac{d}{p}\right)^\frac dp$.

We next prove that any minimizing sequence is relatively compact in $W^{1,p}(\R^d)$. Let $(u_n)\subset W^{1,p}(\R^d)$ be a minimizing sequence for $E(v)$. Using the bounds in the first part of the proof it is easy to see that $(u_n)$ is bounded in $W^{1,p}(\R^d)$ and therefore, after passing to a subsequence if necessary, we may assume that $u_n$ converges weakly in $W^{1,p}(\R^d)$ to some $u\in W^{1,p}(\R^d)$. By weak convergence,
\begin{equation}
\label{eq:comp}
\liminf_{n\to\infty} \|\nabla u_n\|_p^p \geq \|\nabla u\|_p^p \,,
\qquad
1 \geq \liminf_{n\to\infty} \|u_n\|_p^p \geq \|u\|_p^p \,,
\end{equation}
and, by the Rellich--Kondrashov theorem (see, e.g., \cite[Thm. 8.9]{ll}), $u_n(0)\to u(0)$. We conclude that
$$
0> E(v) = \lim_{n\to\infty} \left( \|\nabla u_n\|_p^p - v |u_n(0)|^p \right) \geq \|\nabla u\|_p^p - v|u(0)|^p \geq E(v) \|u\|_p^p \,.
$$
This, together with the second assertion in \eqref{eq:comp} implies that $\|u\|_p = 1$. Together with the first assertion in \eqref{eq:comp} and the convergence of $u_n(0)$ it also implies that $\|\nabla u_n\|_p\to \|\nabla u\|_p$. Thus, $u_n$ converges in fact strongly to $u$ in $W^{1,p}(\R^d)$.

Thus, we have shown that there is a minimizer. In view of the rearrangement inequalities $\|\nabla u^*\|_p \leq \|\nabla u\|_p$, $\|u^*\|_p=\|u\|_p$ and $|u^*(0)|\geq |u(0)|$ (see, e.g.,~\cite{tal} and \cite[Thm. 3.4]{ll}) we see that among the minimizers there is a non-negative, symmetric decreasing function. This concludes the proof.
\end{proof}

\begin{rem}\label{soblemmarem}
It is easy to see that
$$
E(v) = \inf_{\|u\|_p=1} \left( \|\nabla u\|_p^p - v \|u\|_\infty^p \right) .
$$
This will be useful in the following.
\end{rem}

In one dimension we can compute the value of the sharp constant $\s_{d,p}$ in \eqref{sob}.

\begin{lem} \label{sobminimizer}
If $d=1$, then $\s_{1,p} = \frac p2$ for any $p>1$.
\end{lem}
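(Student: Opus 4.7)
The plan is to establish $\s_{1,p}=p/2$ by proving matching upper and lower bounds. The upper bound will come from a one-dimensional fundamental-theorem-of-calculus argument combined with Hölder's inequality, and the matching lower bound will come from evaluating the ratio in \eqref{sob} on an explicit exponential test function.

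For the upper bound, I would start with an arbitrary $u\in W^{1,p}(\R)$. Since $p>1=d$, the Morrey embedding provides a continuous representative that vanishes at infinity, so we may pick a point $x_0\in\R$ with $|u(x_0)|=\|u\|_\infty$ (or work with a sequence $x_n$ along which $|u(x_n)|\to\|u\|_\infty$, which makes no difference in the end). Applying the fundamental theorem of calculus to the absolutely continuous function $|u|^p$ on the intervals $(-\infty,x_0]$ and $[x_0,\infty)$, using that $u$ vanishes at $\pm\infty$ and that $\bigl|(|u|^p)'\bigr|\le p|u|^{p-1}|u'|$ almost everywhere, gives the two bounds
$$
|u(x_0)|^p \le p\int_{-\infty}^{x_0}|u|^{p-1}|u'|\,dx,
\qquad
|u(x_0)|^p \le p\int_{x_0}^{\infty}|u|^{p-1}|u'|\,dx.
$$
Adding them and applying Hölder's inequality with conjugate exponents $p/(p-1)$ and $p$ yields
$$
2\|u\|_\infty^p \le p\int_{\R}|u|^{p-1}|u'|\,dx \le p\,\|u\|_p^{p-1}\|u'\|_p,
$$
which is exactly \eqref{sob} with constant $p/2$ in place of $\s_{1,p}$. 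Hence $\s_{1,p}\le p/2$.

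For sharpness, I would plug in the test function $u(x)=e^{-|x|}$, which lies in $W^{1,p}(\R)$. A direct computation gives $\|u\|_\infty=1$ and
$$
\|u\|_p^p = \int_{\R}e^{-p|x|}\,dx = \frac{2}{p}, \qquad \|u'\|_p^p=\int_{\R}e^{-p|x|}\,dx=\frac{2}{p},
$$
so that $\|u'\|_p\,\|u\|_p^{p-1}=2/p$, and the ratio of the two sides of \eqref{sob} is exactly $p/2$. This gives $\s_{1,p}\ge p/2$ and completes the identification.

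There is no real obstacle: the only point that deserves care is the use of the fundamental theorem of calculus for $W^{1,p}(\R)$ functions and the chain rule for $|u|^p$, but both are standard (absolute continuity of the Morrey representative plus the pointwise inequality above). It is worth noting that the exponential $e^{-|x|}$ saturates the Hölder step, which is consistent with the upper bound being sharp, and that the same function is (up to scaling) the symmetric decreasing minimizer guaranteed by Lemma \ref{soblemma}.
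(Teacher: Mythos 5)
Your proof is correct, but it takes a genuinely different and more elementary route than the paper's. The paper exploits the machinery already built in Lemma \ref{soblemma}: it takes the symmetric decreasing minimizer of $E(v)$ whose existence that lemma guarantees, writes the Euler--Lagrange equation on $(0,\infty)$, integrates it to obtain the exponential profile $u(x)=u(0)\exp(-(\lambda/(p-1))^{1/p}x)$, uses the boundary condition at $0$ to pin down $\lambda=(p-1)(v/2)^{p/(p-1)}$, and then reads off $\s_{1,p}$ from the formula $E(v)=-\tfrac{p-d}{p}(d/p)^{d/(p-d)}(\s_{d,p}v)^{p/(p-d)}$. Your argument instead proves the two inequalities $\s_{1,p}\le p/2$ and $\s_{1,p}\ge p/2$ directly on the level of \eqref{sob}: the upper bound is the classical one-dimensional device of writing $2|u(x_0)|^p=\int_{-\infty}^{x_0}(|u|^p)'\,dx-\int_{x_0}^{\infty}(|u|^p)'\,dx$ and applying H\"older with exponents $p/(p-1)$ and $p$, and the lower bound is the explicit test function $e^{-|x|}$, which is indeed (up to scaling) the optimizer the paper finds by solving the ODE. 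Your approach is completely self-contained — it does not invoke Lemma \ref{soblemma} at all — whereas the paper's approach gives you more in passing, namely the explicit form of the minimizer and the value of the Lagrange multiplier. Both are correct; yours is arguably the more transparent route if the only goal is the value of $\s_{1,p}$, while the paper's fits naturally into the flow of the section since Lemma \ref{soblemma} has already been established. One minor remark on your write-up: you do not actually need the maximizing sequence caveat, since the two half-line bounds $|u(x_0)|^p\le p\int_{\pm}|u|^{p-1}|u'|$ hold for \emph{every} $x_0$, so summing and taking the supremum over $x_0$ gives the result with no compactness discussion.
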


\begin{proof}
Let $u$ be the (symmetric decreasing) optimizer for $E(v)$. The Euler--Lagrange equation reads
\begin{equation} \label{euler-lagrange}
(p-1) \, u''(x)\,  (-u'(x))^{p-2} = \lambda u(x)^{p-1} \qquad \text{in}\ (0,\infty) \,,
\end{equation}
together with the boundary condition
$$
2 (-u'(0+))^{p-1} = v u(0)^{p-1} \,.
$$
Multiplying \eqref{euler-lagrange} by $u'$ we obtain
$$
\left( (p-1) (-u')^p - \lambda u^p \right)' = 0
\qquad \text{in}\ (0,\infty) \,.
$$
Since $u\in W^{1,p}(\R^d)$ we have $u(x)\to 0$ as $x\to\infty$. Since $(p-1) (-u')^p - \lambda u^p$ is constant, $\lim_{x\to\infty} u'(x)$ exists as well and, therefore, needs to be zero. Thus
\begin{equation}\label{euler-lagrangemult}
(p-1) (-u')^p - \lambda u^p = 0
\qquad \text{in}\ (0,\infty) \,.
\end{equation}
Note that this shows that $\lambda>0$. Moreover, we obtain
$$
-u' = \left( \frac{\lambda}{p-1} \right)^{\frac1p} u
\qquad\text{in}\ (0,\infty) \,,
$$
and, thus,
$$
u(x) = u(0) \exp\left( - \left( \frac{\lambda}{p-1} \right)^{\frac1p} x \right) 
\qquad\text{in}\ (0,\infty) \,.
$$
The boundary condition implies that $\lambda = (p-1)(v/2)^{p/(p-1)}$. We conclude that
$$
E(v) = \frac{2\int_0^\infty |u'|^p\,dx - v u(0)^p}{2 \int_0^\infty u^p \,dx} 
= -(p-1) \left( \frac{v}{2} \right)^{\frac{p}{p-1}} \,.
$$
By Lemma \ref{soblemma} this implies the assertion.
\end{proof}

\subsection{Preliminaries}

\begin{lem} \label{weak-cont}
Let $p>d$ and assume that $V\in L^1(\R^d)$. Then for any $u\in W^{1,p}(\R^d)$,
\begin{equation} \label{low-sharp}
Q_{V}[ u] \, \geq \, - 
\frac{p-d}{p}\ \left(\frac dp\right)^{\frac{d}{p-d}}\ 
\left( \s_{d,p} \int_{\R^d} V_+\, dx\right)^{\frac{p}{p-d}}\  \|u\|_p^p.
\end{equation}
Moreover, $Q_V [ u]$ is
weakly lower semi-continuous in $W^{1,p}(\R^d)$. 
\end{lem}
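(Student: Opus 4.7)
The proof naturally splits into the inequality \eqref{low-sharp} and the weak lower semi-continuity statement.

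\medskip

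\noindent\textbf{Plan for the inequality.} The idea is to discard the negative part of $V$ and then reduce to the variational problem solved in Lemma \ref{soblemma}. First I would estimate
$$
\int_{\R^d} V\,|u|^p\,dx \;\leq\; \int_{\R^d} V_+\,|u|^p\,dx \;\leq\; \|u\|_\infty^p \int_{\R^d} V_+\,dx,
$$
so that
$$
Q_V[u] \;\geq\; \|\nabla u\|_p^p - v\,\|u\|_\infty^p, \qquad v := \int_{\R^d} V_+\,dx .
$$
Then I would invoke the reformulation of $E(v)$ given in Remark~\ref{soblemmarem}. By homogeneity (writing $\tilde u = u/\|u\|_p$) this identity says exactly that
$$
\|\nabla u\|_p^p - v\,\|u\|_\infty^p \;\geq\; E(v)\,\|u\|_p^p
$$
for every $u\in W^{1,p}(\R^d)$, and Lemma~\ref{soblemma} supplies the explicit value of $E(v)$. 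Substituting this value yields \eqref{low-sharp}. This step is straightforward once Lemma~\ref{soblemma} is in hand.

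\medskip

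\noindent\textbf{Plan for weak lower semi-continuity.} I would write $Q_V[u] = \|\nabla u\|_p^p - \int_{\R^d} V\,|u|^p\,dx$. The gradient term is weakly lower semi-continuous on $W^{1,p}(\R^d)$ since, if $u_n \rightharpoonup u$, then $\nabla u_n \rightharpoonup \nabla u$ in $L^p(\R^d)$ and the $L^p$-norm is weakly lower semi-continuous. The nontrivial part is to show that the potential term is in fact weakly \emph{continuous}, i.e.\ $\int V\,|u_n|^p\,dx \to \int V\,|u|^p\,dx$.

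\medskip

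\noindent\textbf{Main obstacle and its resolution.} The delicate point is to control the potential term globally, since $V$ is only assumed to lie in $L^1(\R^d)$ with no further decay. This is where the hypothesis $p>d$ enters decisively. A weakly convergent sequence $(u_n)$ is bounded in $W^{1,p}(\R^d)$, so by Morrey's inequality it is bounded in $L^\infty(\R^d)$, say $\|u_n\|_\infty\leq M$, and by the Rellich--Kondrashov theorem (on bounded subdomains) $u_n\to u$ uniformly on compact subsets of $\R^d$. Given $\eps>0$, I would choose $R>0$ so that $\int_{|x|>R}|V|\,dx < \eps$, which is possible since $V\in L^1(\R^d)$. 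On $B_R$, the uniform convergence $|u_n|^p\to|u|^p$ together with $|V|\in L^1(B_R)$ gives
$$
\int_{B_R} V\,\bigl(|u_n|^p - |u|^p\bigr)\,dx \;\longrightarrow\; 0 .
$$
On the complement, the uniform $L^\infty$ bound yields
$$
\left|\int_{B_R^c} V\,\bigl(|u_n|^p - |u|^p\bigr)\,dx\right| \;\leq\; 2 M^p \int_{B_R^c}|V|\,dx \;<\; 2 M^p\,\eps .
$$
Letting $n\to\infty$ and then $\eps\to 0$ establishes weak continuity of the potential term, which combined with weak lower semi-continuity of $\|\nabla u\|_p^p$ completes the proof.
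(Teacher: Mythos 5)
Your proof is correct and follows essentially the same approach as the paper's: for \eqref{low-sharp} you discard $V_-$ and reduce to $E(v)$ via Remark~\ref{soblemmarem} and Lemma~\ref{soblemma}, and for weak lower semi-continuity you show weak \emph{continuity} of the potential term by splitting $\R^d$ into $B_R$ and $B_R^c$, using Rellich--Kondrashov (uniform convergence on compacta) on the ball and the uniform $L^\infty$ bound from Morrey together with $V\in L^1$ on the exterior. The paper writes the local estimate slightly differently (it factors $|u_j|^p-|u|^p$ explicitly rather than citing uniform convergence of the $p$-th powers), but this is cosmetic.
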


\begin{proof}
For any $u\in W^{1,p}(\R^d)$,
$$
Q_V[u] \geq \|\nabla u\|_p^p - \int_{\R^d} V_+ \,dx\ \|u\|_\infty^p
\geq E\left( \int_{\R^d} V_+ \,dx \right) \,.
$$
The second inequality used Remark \ref{soblemmarem}. The first assertion now follows from Lemma \ref{soblemma}.

To prove weak lower semi-continuity assume that $(u_j)$ converges weakly in $W^{1,p}(\R^d)$ to some $u$. Then
the sequence $(u_j)$ is bounded in $W^{1,p}(\R^d)$ and hence, by \eqref{sob}, in $L^\infty(\R^d)$.  We have
\begin{equation} \label{eq:aux1}
\Big | \int_{\R^d} V (|u_j|^p-|u|^p)\, dx \Big | \leq
\|u_j - u\|_{L^\infty(B_R)} \|f_j\|_\infty \|V\|_1
+ 2\left( \sup_j \|u_j\|_\infty^p \right) \,  \|V\|_{L^1({B_R^c})},
\end{equation}
where 
$f_j := (|u_j|^p - |u|^p)/(|u_j|-|u|)$ satisfies $|f_j|\leq p\max\{|u_j|^{p-1},|u|^{p-1}\}$ and is therefore bounded. Since the sequence $(u_j)$ is bounded in $W^{1,p}(\R^d)$, inequality
\eqref{sob} implies that $\|f_j\|_\infty$ is bounded uniformly with
respect to $j$. On the other hand, the
Rellich-Kondrashov theorem (see, e.g., \cite[Thm.8.9]{ll}) says that $(u_j)$
converges to $u$ uniformly on compact subsets of $\R^d$.
Hence, sending first $j\to \infty$ and then $R\to
\infty$ in \eqref{eq:aux1} shows that the functional $\int_{\R^d}\,
V\, |u|^p\, dx$ is weakly continuous on $W^{1,p}(\R^d)$. Since
$\|\nabla u\|_p^p$ is weakly lower semi-continuous, due to the fact
that $p > 1$, the same is true for $Q_V[u]$.
\end{proof}

\begin{rem}
Note that inequality \eqref{low-sharp} yields the lower bound in \eqref{limit-1} in the case $V\geq 0$.
\end{rem}

\begin{cor} \label{cor-minimizer}
Let $V\in L^1(\R^d)$ and $p>d$. Assume that $\lambda(V)<0$. Then there is a non-negative function $u\in W^{1,p}(\R^d)$ such that
\begin{equation} \label{minimum}
\lambda(V)= \frac{Q_V[ u]}{\|u\|_p^p}.
\end{equation}
\end{cor}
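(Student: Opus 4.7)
The plan is to apply the direct method of the calculus of variations, using the weak lower semi-continuity from Lemma \ref{weak-cont} and the hypothesis $\lambda(V)<0$ to prevent loss of mass.

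First, I would take a minimizing sequence $(u_n)\subset W^{1,p}(\R^d)$ for \eqref{var-problem}, normalized so that $\|u_n\|_p=1$ and $Q_V[u_n]\to \lambda(V)$. To get uniform control on $\|\nabla u_n\|_p$, I combine the definition of $Q_V$ with the Sobolev inequality \eqref{sob}:
\begin{equation*}
\|\nabla u_n\|_p^p = Q_V[u_n] + \int_{\R^d} V\,|u_n|^p\,dx \leq Q_V[u_n] + \|V_+\|_1\, \s_{d,p}\, \|\nabla u_n\|_p^d.
\end{equation*}
Since $d<p$, Young's inequality absorbs the term $\|\nabla u_n\|_p^d$ into $\tfrac12\|\nabla u_n\|_p^p$ plus a constant, so $(u_n)$ is bounded in $W^{1,p}(\R^d)$. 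Passing to a subsequence, I may assume $u_n\rightharpoonup u$ weakly in $W^{1,p}(\R^d)$.

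By the weak lower semi-continuity from Lemma \ref{weak-cont},
\begin{equation*}
Q_V[u] \leq \liminf_{n\to\infty} Q_V[u_n] = \lambda(V) < 0,
\end{equation*}
so in particular $u\not\equiv 0$. Weak convergence also gives $\|u\|_p \leq \liminf \|u_n\|_p = 1$. By the definition of $\lambda(V)$,
\begin{equation*}
\lambda(V) \leq \frac{Q_V[u]}{\|u\|_p^p} \leq \frac{\lambda(V)}{\|u\|_p^p}.
\end{equation*}
Because $\lambda(V)<0$ and $\|u\|_p^p\leq 1$, the last inequality forces $\|u\|_p = 1$, and then $Q_V[u]=\lambda(V)$, so $u$ is a minimizer. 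Finally, since $|\nabla|u|\,|\leq |\nabla u|$ a.e.\ and $\big\||u|\big\|_p=\|u\|_p$, the function $|u|$ is also a minimizer, yielding the desired non-negative $u$.

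The one step requiring genuine care is ruling out loss of mass under weak convergence: a priori $\|u\|_p$ could drop below $1$ if a portion of the $L^p$ mass escapes to infinity. The assumption $\lambda(V)<0$ is exactly what closes this gap, via the short chain of inequalities above. Everything else (Sobolev bounds, weak lower semi-continuity, the rearrangement-style step for non-negativity) is handled by ingredients already assembled in Lemmas \ref{soblemma} and \ref{weak-cont}.
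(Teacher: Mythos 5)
Your proof is correct and follows essentially the same route as the paper: extract a minimizing sequence normalized in $L^p$, obtain a uniform $W^{1,p}$ bound from the Sobolev inequality \eqref{sob}, pass to a weak limit, invoke the weak lower semi-continuity from Lemma \ref{weak-cont}, and use $\lambda(V)<0$ together with $\|u\|_p\leq 1$ to force $\|u\|_p=1$, concluding with the replacement $u\mapsto|u|$. The only (inessential) difference is in how you close the coercivity estimate: the paper assumes WLOG that $Q_V[u_j]<0$ and reads off $\|\nabla u_j\|_p^p<\s_{d,p}\|V_+\|_1\|\nabla u_j\|_p^d$ directly, whereas you keep $Q_V[u_n]$ on the right (bounded since it converges) and absorb the subcritical term via Young's inequality; both give the same conclusion.
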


\begin{proof}
Let $(u_j)$ be a minimizing sequence for $Q_{V}$, normalized such that $\|u_j\|_p=1$ for any $j\in\N$.
Since $\lambda( V)<0$, we may assume without loss of
generality that $Q_V[ u_j]<0$ for any
$j\in\N$. Hence with the help of \eqref{sob} we get
\begin{equation} \label{aux}
\|\nabla u_j\|_p^p < \, \int_{\R^d}\,
V_+ \, |u_j|^p\, dx \, \leq \, \|V_+\|_1\, \|u_j\|^p_\infty
\leq \,\s_{d,p}\, \|V_+\|_1\, \|\nabla u_j\|_p^d \,.
\end{equation}
Since $p>d$,
it follows that the sequence $(u_j)$ is bounded in
$W^{1,p}(\R^d)$ and, after passing to a subsequence if necessary, 
we may assume that $(u_j)$ converges
weakly in $W^{1,p}(\R^d)$ to some $u \in W^{1,p}(\R^d)$. The weak convergence implies
$$
\|u\|_p\leq \liminf_{j\to\infty} \|u_j\|_p = 1.
$$
Since $Q_{V}[ u]$ is weakly lower semicontinuous by Lemma
\ref{weak-cont}, the above inequality implies 
$$
0>\lambda( V) = \lim_{j\to\infty}\, Q_V[ u_j] \geq
Q_V[ u] \geq \lambda( V)\, \|u\|_p^p \geq
\lambda( V).
$$
This implies that $Q_V[u] = \lambda( V) $ and $\|u\|_p=1$, i.e., $u$ is a minimizer for the problem \eqref{var-problem}.

Since $u\in W^{1,p}(\R^d)$ implies $|u|\in W^{1,p}(\R^d)$ with $|\nabla |u|| = |\nabla u|$ almost everywhere (see, e.g., \cite[Thm. 6.17]{ll}), we may choose $u$ non-negative.  
\end{proof}

\subsection{Proof of Theorem \ref{thm-main-non-crit}. Upper bound}
\label{sect-upperb-1}


For any fixed function $\varphi\in W^{1,p}(\R^d)$ with $\|\varphi\|_p=1$ we define
$$
v_\alpha(x) := \alpha^{\frac{d}{p(p-d)}} \, \varphi(\alpha^{\frac{1}{p-d}} x), \quad \alpha>0 \,.
$$
Then $\|v_\alpha\|_p=1$ for all $\alpha>0$ and
\begin{align*}
\lambda(\alpha V) \leq Q_{\alpha V}[v_\alpha] 
= \alpha^{\frac{p}{p-d}} \Big( \|\nabla \varphi\|_p^p - \int_{\R^d}\, V(x) |\varphi(\alpha^{\frac{1}{p-d}} x)|^p\, dx\Big) \,.
\end{align*}
Since $\varphi\in W^{1,p}(\R^d)$, the Sobolev embedding implies that $\varphi\in C(\R^d)\cap L^\infty(\R^d)$ and therefore, by dominated convergence,
$$
\int_{\R^d}\, V(x) |\varphi(\alpha^{\frac{1}{p-d}} x)|^p\, dx
\to \int_{\R^d}\, V\, dx \ |\varphi(0)|^p 
\qquad\text{as}\ \alpha\to 0\,.
$$
Since $\varphi$ is arbitrary, we have shown that
$$
\limsup_{\alpha\to 0+} \alpha^{\frac{p}{d-p}} \lambda(\alpha V) = \inf_{\|\varphi\|_p=1} \left( \|\nabla \varphi\|_p^p - \int_{\R^d}\, V\,dx\ |\varphi(0)|^p \right) = E\left(\int_{\R^d}\, V\,dx \right) \,.
$$
The upper bound in Theorem \ref{thm-main-non-crit} now follows from Lemma \ref{soblemma}.


\subsection{Proof of Theorem \ref{thm-main-non-crit}. Lower bound}

It follows from the proof of the upper bound that $\lambda(\alpha V)<0$ for all sufficiently small $\alpha>0$ and hence, by Corollary \ref{cor-minimizer}, for all such $\alpha$ there is a non-negative minimizer $u_\alpha$ of the problem \eqref{var-problem}. (It is easy to see that, in fact, $\lambda(\alpha V)<0$ for \emph{all} $\alpha>0$. Indeed, $\alpha^{-1} Q_{\alpha V}[u]$ is non-increasing for every $u\in W^{1,p}(\R^d)$ and therefore $\alpha^{-1} \lambda(\alpha V)$ is non-increasing. Thus, if it is negative for some $\alpha>0$, it is negative for all larger $\alpha$'s.)

We normalize $u_\alpha$ so that $\|u_\alpha\|_p=1$. The key step in the proof is to show that
\begin{equation}
\label{eq:minimizersgoal}
\lim_{\alpha\to 0+} \alpha^{-\frac d{p-d}} \int_{\R^d} V(x)\, \left ( u_\alpha(x)^p- u_\alpha(0)^p\right )\, dx = 0 \,.
\end{equation}
Assuming this for the moment, let us complete the proof. We define
\begin{equation} \label{scaling}
f_\alpha(x) = \alpha^{-\frac{d}{p(p-d)}}\, u_\alpha\Big(x\,
\alpha^{-\frac{1}{p-d}}\Big )
\end{equation}
and observe that $\|f_\alpha\|_p=1$ and
$$
\|\nabla f_\alpha \|_p^p- \int_{\R^d} V_\alpha(x)\,  f_\alpha(x)^p\, dx
= \alpha^{-\frac{p}{p-d}}\  Q_{\alpha V}[u_\alpha] \,,
$$
where $V_\alpha(x) = \alpha^{-d/(p-d)} V(x\, \alpha^{-1/(p-d)})$. Since \eqref{eq:minimizersgoal} can be rewritten as
$$
\lim_{\alpha\to 0} \left( \int_{\R^d} V_\alpha(x)\, f_\alpha(x)^p\,dx - \int_{\R^d} V\,dx\  f_\alpha(0)^p \right ) = 0 \,,
$$
we obtain
\begin{align}\label{eq:lowerboundmin}
\liminf_{\alpha\to 0+} \alpha^{-\frac{p}{p-d}}\ \lambda(\alpha V)
& = \liminf_{\alpha\to 0+} \alpha^{-\frac{p}{p-d}}\ Q_{\alpha V}[u_\alpha] \notag \\
& = \liminf_{\alpha\to 0+} \left( \|\nabla f_\alpha\|_p^p - \int_{\R^d} V\,dx \, f_\alpha(0)^p \right) \notag \\
& \geq E \left( \int_{\R^d} V\,dx \right) \notag \\
& = - \frac{p-d}{p}\ \left(\frac dp\right)^{\frac{d}{p-d}}\ 
\left( \s_{d,p} \int_{\R^d} V(x)\, dx\right)^{\frac{p}{p-d}} \,.
\end{align}
The last equality comes from Lemma \ref{soblemma}. This is the lower bound claimed in Theorem \ref{thm-main-non-crit}.

It remains to prove \eqref{eq:minimizersgoal}. Arguing as in \eqref{aux} we obtain $\| \nabla u_\alpha \|_p^p \leq \alpha \s_{d,p} \| V_+\|_1 \|\nabla u_\alpha\|_p^d$, and therefore
\begin{equation}
\label{eq:kinaprior}
\|\nabla u_\alpha \|_p \leq C \alpha^{\frac{1}{p-d}} \,.
\end{equation}
According to \eqref{sob} this also implies
\begin{equation}
\label{eq:maxaprior}
\|u_\alpha\|_\infty^p \leq C' \alpha^{\frac{d}{p-d}} \,.
\end{equation}
By Morrey's Sobolev inequality there is a constant $\mathcal M=\mathcal M_{d,p}$ such that for all $v\in W^{1,p}(\R^d)$ and all $x, y \in \R^d$ one has
\begin{equation}
\label{eq:morrey}
|v(x)-v(y)| \leq \mathcal M |x-y|^{(p-d)/p} \|\nabla v\|_p \,.
\end{equation}
We now fix $R>0$ and use Morrey's inequality \eqref{eq:morrey} together with \eqref{eq:kinaprior} to get for all $x\in B_R$
$$
|u_\alpha(x) - u_\alpha(0)| \leq \mathcal M R^{\frac{p-d}p} \|\nabla u_\alpha \|_p \leq C_R\ \alpha^{\frac{1}{p-d}}
$$
This, together with \eqref{eq:maxaprior}, yields for all $x\in B_R$
$$
|u_\alpha(x)^p - u_\alpha(0)^p| \leq p\, |u_\alpha(x) - u_\alpha(0)|\, \max\{ u_\alpha(x)^{p-1},u_\alpha(0)^{p-1}\} \leq C_R'\ \alpha^{\frac{p+d(p-1)}{p(p-d)}}
$$
Thus,
\begin{align*}
& \alpha^{-\frac d{p-d}} \left| \int_{\R^d} V(x)\, \left ( u_\alpha(x)^p- u_\alpha(0)^p\right )\, dx \right| \\
& \qquad \leq \alpha^{-\frac d{p-d}}\, \| V\|_1\, \sup_{B_R} | u_\alpha^p- u_\alpha(0)^p | + \alpha^{-\frac d{p-d}} \,2\,\|u_\alpha\|_\infty^p \,\int_{B_R^c} |V| \,dx \\
& \qquad \leq \alpha^{\frac 1p} C_R' \, \| V\|_1 + 2 C' \int_{B_R^c} |V| \,dx \,.
\end{align*}
Letting first $\alpha\to 0$ and then $R\to\infty$ we obtain \eqref{eq:minimizersgoal}. This completes the proof.


\subsection{Convergence of minimizers}

The following theorem about the behavior of the $u_\alpha$ is an (almost) immediate consequence of Lemma \ref{soblemma} and Theorem \ref{thm-main-non-crit} and its proof.

\begin{prop}  \label{prop:minimizers}
Let $p>d$ and let $V\in L^1(\R^d)$ with $\int_{\R^d} V(x) dx>0$.
For $\alpha>0$ let $u_\alpha$ be a
non-negative minimizer of $Q_{\alpha V}[ \cdot]$ with
$\|u_\alpha\|_p=1$ and define $f_\alpha$ by \eqref{scaling}.
Then for any sequence $(\alpha_n)\subset (0,\infty)$ converging to zero there is a subsequence $(\alpha_{n_k})$ and an $f_0\in W^{1,p}(\R^d)$ such that 
$f_{\alpha_{n_k}}\to f_0$ in $W^{1,p}(\R^d)$. Moreover, $f_0$ is
a minimizer of \eqref{eq:e} with $v=\int_{\R^d} V\,dx$.
\end{prop}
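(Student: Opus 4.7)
The plan is to extract a strongly convergent subsequence from $(f_\alpha)$ using the a priori bounds already established in the proof of Theorem \ref{thm-main-non-crit}, and then identify the limit as a minimizer of $E(v)$ with $v:=\int_{\R^d} V\,dx$. First, by the a priori bound \eqref{eq:kinaprior} and the scaling \eqref{scaling}, $\|\nabla f_\alpha\|_p^p = \alpha^{-p/(p-d)}\|\nabla u_\alpha\|_p^p$ is bounded uniformly in $\alpha$, while $\|f_\alpha\|_p=1$. Hence for any sequence $\alpha_n\to 0$ there is a subsequence $(\alpha_{n_k})$ with $f_{\alpha_{n_k}} \rightharpoonup f_0$ weakly in $W^{1,p}(\R^d)$. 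Since $p>d$, Morrey's inequality \eqref{eq:morrey} shows this subsequence is equicontinuous, so by Arzel\`a--Ascoli (after passing to the continuous representative of $f_0$) we may assume locally uniform convergence; in particular $f_{\alpha_{n_k}}(0)\to f_0(0)$.

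Next, one passes to the limit in the rescaled Rayleigh quotient. Using the scaling identity
$$
\alpha^{-p/(p-d)}\, Q_{\alpha V}[u_\alpha] = \|\nabla f_\alpha\|_p^p - \int_{\R^d} V_\alpha\, f_\alpha^p\, dx
$$
together with \eqref{eq:minimizersgoal} (which says $\int V_\alpha f_\alpha^p\,dx - v f_\alpha(0)^p \to 0$) and Theorem \ref{thm-main-non-crit}, one obtains
$$
\lim_{k\to\infty}\left(\|\nabla f_{\alpha_{n_k}}\|_p^p - v f_{\alpha_{n_k}}(0)^p\right) = E(v).
$$
Weak lower semi-continuity of $u \mapsto \|\nabla u\|_p^p$ and the pointwise convergence at $0$ yield $\|\nabla f_0\|_p^p - v f_0(0)^p \leq E(v)$. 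Conversely, the scaling $w = f_0/\|f_0\|_p$ in the definition of $E(v)$ gives $\|\nabla f_0\|_p^p - v f_0(0)^p \geq E(v)\,\|f_0\|_p^p$. Since $\|f_0\|_p\leq 1$ by weak lower semi-continuity and $E(v)<0$ by Lemma \ref{soblemma}, this lower bound is at least $E(v)$. All inequalities must therefore be equalities, which forces $\|f_0\|_p=1$ and identifies $f_0$ as a minimizer of \eqref{eq:e}.

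Finally, to upgrade weak convergence to strong convergence, the equalities above give $\|f_{\alpha_{n_k}}\|_p\to\|f_0\|_p$ and $\|\nabla f_{\alpha_{n_k}}\|_p^p \to \|\nabla f_0\|_p^p$; combined with weak convergence and the uniform convexity of $L^p(\R^d)$ for $1<p<\infty$, this gives strong convergence of $f_{\alpha_{n_k}}$ and $\nabla f_{\alpha_{n_k}}$ in $L^p(\R^d)$, hence in $W^{1,p}(\R^d)$. The only mildly delicate point is ruling out $\|f_0\|_p<1$, and this is precisely where the strict inequality $E(v)<0$ from Lemma \ref{soblemma} is indispensable; all remaining steps are a repackaging of the tools assembled for the proof of Theorem \ref{thm-main-non-crit}.
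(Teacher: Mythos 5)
Your argument is correct and follows essentially the same route as the paper: you show that $(f_\alpha)$ is a minimizing sequence for the problem \eqref{eq:e} with $v=\int_{\R^d} V\,dx$ (via the scaling identity, \eqref{eq:minimizersgoal}, and Theorem~\ref{thm-main-non-crit}), and then extract a strongly convergent subsequence. The only difference is cosmetic: the paper simply cites the relative-compactness assertion from Lemma~\ref{soblemma}, whereas you re-derive that compactness step inline (using Morrey/Arzel\`a--Ascoli where the paper's lemma uses Rellich--Kondrashov to get $f_{\alpha_{n_k}}(0)\to f_0(0)$, and invoking the Radon--Riesz property at the end); both are the same underlying argument.
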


We recall that, by the Sobolev embedding theorem and the Rellich--Kondrachov theorem, convergence in $W^{1,p}(\R^d)$ for $p>d$ implies convergence in $L^\infty(\R^d)$ and in $C^{0,(p-d)/p}(\R^d)$.

We also note that if the minimizer of the Sobolev inequality \eqref{sob} is unique (up to translations, dilations and multiplication by constants), then Proposition \ref{prop:minimizers} implies that $f_\alpha$ converges as $\alpha\to 0$ (without passing to a subsequence).

\begin{proof}
It follows from \eqref{eq:lowerboundmin} together with the upper bound in Theorem \ref{thm-main-non-crit} that $(f_\alpha)$ is a minimizing sequence for problem \eqref{eq:e} with $v= \int_{\R^d} V\,dx$. Therefore, the assertion follows from the relative compactness asserted in Lemma \ref{soblemma}.
\end{proof}

\section{Case $d = p$}    \label{sect:p=d} 

Throughout this section we suppose that $p=d$. Similarly as in the case $d<p$ we start with a couple of preliminary lemmas which which will be used to ensure existence of a minimizer of problem \eqref{var-problem}.

\subsection{Preliminary results}

\begin{lem} \label{bounded:critical}
Assume that $V\in L^q(\R^d)$ with some $q >1$. Then $Q_V [u]/ \|u\|_d^d$ is bounded from below and $Q_V[\cdot]$ is weakly lower semi-continuous in $W^{1,p}(\R^d)$.
\end{lem}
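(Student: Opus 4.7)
The proof follows the pattern of Lemma \ref{weak-cont}, but because $W^{1,d}(\R^d)$ is not embedded in $L^\infty(\R^d)$ I have to replace Morrey's inequality by the full scale of critical embeddings $W^{1,d}(\R^d)\hookrightarrow L^s(\R^d)$, $s\in[d,\infty)$, quantified by the Gagliardo--Nirenberg inequality
\begin{equation*}
\|u\|_s \leq C_s\, \|\nabla u\|_d^{1-d/s}\, \|u\|_d^{d/s}\,,\qquad s\in[d,\infty)\,.
\end{equation*}
The hypothesis $q>1$ enters precisely so that the conjugate exponent $s=dq'$, with $q'=q/(q-1)$, is finite.

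\emph{Boundedness from below.} H\"older followed by the above embedding at $s=dq'$ gives
\begin{equation*}
\int_{\R^d} V\,|u|^d\,dx \leq \|V\|_q\,\|u\|_{dq'}^d \leq C\,\|V\|_q\, \|\nabla u\|_d^{d/q}\, \|u\|_d^{d/q'}\,.
\end{equation*}
Dividing by $\|u\|_d^d$ and writing $t=\|\nabla u\|_d^d/\|u\|_d^d\geq 0$, this yields $Q_V[u]/\|u\|_d^d \geq t - C\|V\|_q\, t^{1/q}$. Since $1/q<1$, the right-hand side is continuous on $[0,\infty)$ and tends to $+\infty$ as $t\to\infty$, so it attains a finite minimum, which is the required lower bound.

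\emph{Weak lower semi-continuity.} Let $u_j\rightharpoonup u$ in $W^{1,d}(\R^d)$. Since $u\mapsto\|\nabla u\|_d^d$ is already weakly lower semi-continuous, it suffices to prove weak continuity of $u\mapsto\int V|u|^d\,dx$. I split $\R^d=B_R\cup B_R^c$. On $B_R^c$, H\"older with exponents $(q,q')$ gives
\begin{equation*}
\Big|\int_{B_R^c} V(|u_j|^d-|u|^d)\,dx\Big| \leq \|V\|_{L^q(B_R^c)}\bigl(\|u_j\|_{dq'}^d+\|u\|_{dq'}^d\bigr),
\end{equation*}
and the bracket is bounded uniformly in $j$ because $(u_j)$ is bounded in $W^{1,d}(\R^d)\hookrightarrow L^{dq'}(\R^d)$; the first factor tends to $0$ as $R\to\infty$. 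On $B_R$, I use the elementary bound $\bigl||u_j|^d-|u|^d\bigr|\leq d(|u_j|+|u|)^{d-1}|u_j-u|$ followed by a three-factor H\"older with finite exponents $q, r, s$ satisfying $1/q+1/r+1/s=1$: the factor $(|u_j|+|u|)^{d-1}$ is uniformly bounded in $L^r(\R^d)$ by Sobolev embedding into $L^{r(d-1)}$, while $\|u_j-u\|_{L^s(B_R)}\to 0$ by the Rellich--Kondrashov theorem. Sending first $j\to\infty$ and then $R\to\infty$ finishes the argument.

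\emph{Main obstacle.} The delicate point is the absence of an $L^\infty$ bound on $u_j$ on the tail $B_R^c$, which is what forces the strengthened hypothesis $V\in L^q$ with $q>1$ (rather than just $V\in L^1$) through the requirement that $dq'$ be finite. Once the correct H\"older splitting is chosen, the remainder of the argument is a routine adaptation of the proof of Lemma \ref{weak-cont}.
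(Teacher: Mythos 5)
Your proof is correct and follows the paper's strategy closely; the boundedness-from-below argument is essentially identical (Hölder against $V\in L^q$, then the Gagliardo--Nirenberg/Sobolev interpolation at the conjugate exponent $dq'$, then minimize in $t=\|\nabla u\|_d^d/\|u\|_d^d$). For weak lower semi-continuity you diverge in a small but genuine way: you decompose $\R^d$ into a ball $B_R$ and its complement, whereas the paper decomposes along level sets $\Omega_\delta=\{|V|>\delta\}$. On the level set the paper uses a two-factor Hölder and has to invoke Rellich--Kondrashov on a set of finite (not necessarily bounded) measure, which requires a short additional argument; your choice of $B_R$ sidesteps that, at the cost of a three-factor Hölder $(q,r,s)$ on $B_R$ and the observation that $\|V\|_{L^q(B_R^c)}\to0$ as $R\to\infty$. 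Both splittings work under exactly the hypothesis $V\in L^q$, $q>1$ (note $d=p>1$ ensures $(d-1)/d>0$, so exponents $r,s$ with $1/q+1/r+1/s=1$, $r(d-1)\in[d,\infty)$ and $s<\infty$ exist), and your version is arguably the more transparent of the two since it appeals only to the standard bounded-domain form of Rellich--Kondrashov.
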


Recall that by Sobolev inequalities, see, e.g., \cite{a}, for every $r \in [d, \infty)$ there is a constant $\tilde {\mathcal S}_{d,r}$ such that 
\begin{equation}\label{sobolev-d=p}
\|u\|_r \leq \tilde {\mathcal S}_{d,r} \, \|\nabla u\|_d^\theta \, \|u\|_d^{1-\theta}\,,
\qquad \text{for all}\ u\in W^{1,d}(\R^d) \,.
\end{equation}
Here $0\leq\theta<1$ is defined by $\frac dr = 1-\theta$.

\begin{proof}
H\"older's inequality and \eqref{sobolev-d=p} with $r=dq/(q-1)$ imply that
$$
 \int_{\R^d} V |u|^d \,dx \leq \|V_+\|_q \|u\|_r^d
\leq \|V_+\|_q \, \tilde {\mathcal S}_{d,r} \, \|\nabla u\|_d^{d\theta} \, \|u\|_d^{d(1-\theta)} \,.
$$
Thus,
\begin{align*}
Q_V[u] & \geq \|\nabla u\|_d^d  - \|V_+\|_q \, \tilde {\mathcal S}_{d,r} \, \|\nabla u\|_d^{d\theta} \, \|u\|_d^{d(1-\theta)} \\
& \geq \inf_{X\geq 0} \left( X - \|V_+\|_q \, \tilde {\mathcal S}_{d,r} \, X^{\theta} \, \|u\|_d^{d(1-\theta)} \right) \\
& \geq -C\ \|V_+\|_q^{\frac{1}{1-\theta}} \|u\|_d^d
\end{align*}
where $C>0$ depends only on $d$ and $q$ (through $r$). This proves lower boundedness.

Now let us prove weak lower semi-continuity of $Q_V[u]$. As in the proof of Lemma \ref{weak-cont} it suffices to show that $\int_{\R^d} V|u|^p\,dx$ is weakly continuous on $W^{1,d}(\R^d)$. Assume that $(u_j)$ converges weakly in $W^{1,d}(\R^d)$ to some $u$. Given $\delta>0$ define
$\Omega_\delta=\{x\in\R^d\, :\, |V(x)| > \delta\}$. Since $(u_j)$ is bounded in $L^d(\R^d)$, we have
\begin{equation} \label{delta}
\Big | \int_{\Omega^c_\delta} V (|u|^d-|u_j|^d)\, dx\Big
| \leq \, C\, \delta
\end{equation}
with $C$ independent of $j$. Moreover, the Sobolev inequality
\eqref{sobolev-d=p} implies that $u_j$ is uniformly bounded in
$L^r(\R^d)$ for every $r\in[d, \infty)$. Hence by H\"older inequality 
\begin{align*}
\Big | \int_{\Omega_\delta} V (|u|^d-|u_j|^d)\, dx\Big | & \leq
\|V\|_q \, \left( \int_{\Omega_\delta}
||u|^d-|u_j|^d|^{\frac q{q-1}} \, dx \right)^{\frac{q-1}q}\\
& = \|V\|_q \, \left( \int_{\Omega_\delta} |(|u|-|u_j|)\,
\varphi_j|^{\frac q{q-1}} \, dx
\right)^{\frac{q-1}{q}},
\end{align*}
where for every $r\in [d,\infty)$ there is a $C_r$ such that $\|\varphi_j\|_r \leq C_r$ for all $j$. Since $\Omega_\delta$ has finite measure,
$u_j\to u$ in $L^r(\Omega_\delta)$ for any $r<\infty$ by the Rellich--Kondrashov theorem. (For instance, in \cite[Thm. 8.9]{ll}, the Rellich--Kondrashov theorem is only stated for bounded sets. However, for any $\epsilon>0$ we can find a bounded set $\omega\subset\Omega_\delta$ such that $|\Omega_\delta\setminus\omega|<\epsilon$. Then $u_j\to u$ in $L^r(\omega)$ by the bounded Rellich-Kondrashov theorem and, since $(u_j)$ is bounded in $L^s(\Omega_\delta)$ for some $s>r$, by H\"older $\|u_j\|_{L^r(\Omega_\delta\setminus\omega)} \leq \|u_j\|_{L^s(\Omega_\delta)} \epsilon^{(s-r)/s}$. Thus, $u_j\to u$ in $L^r(\Omega_\delta)$, as claimed.)

We thus conclude, again with $r= 2q/(q-1)$, that
$$
\int_{\Omega_\delta} |(|u|-|u_j|)\, \varphi_j|^{\frac q{q-1}}
\, dx \, \leq \, C_r^{\frac q{q-1}} \, \left(\int_{\Omega_\delta}
|u-u_j|^{\frac{2q}{q-1}} \, dx \right)^{1/2}\, \to \, 0
\quad \text{as\, } j\to\infty.
$$
This in combination with \eqref{delta} proves the claimed weak continuity.
\end{proof}


\subsection{Proof of Theorem \ref{thm-main-crit}. Upper bound}


\begin{prop} \label{lowerb-prop}
Let $V\in L^1(\R^d)$ be such that $\int_{\R^d} V(x)\, dx  > 0$. Then 
\begin{equation} \label{lowerb-limit}
\limsup_{\alpha\to 0+}\, \alpha^{\frac{1}{d-1}}\, \log \frac1{|\lambda(\alpha V)|} \, \leq \, d\, \omega_d^{\frac{1}{d-1}}\, 
\Big(\int_{\R^d} V(x)\, dx \Big)^{-\frac{1}{d-1}}.
\end{equation}
\end{prop}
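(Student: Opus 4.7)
The plan is to obtain the claimed upper bound on $\log(1/|\lambda(\alpha V)|)$ by a single calibrated family of explicit test functions plugged directly into the variational inequality $\lambda(\alpha V) \leq Q_{\alpha V}[u]/\|u\|_d^d$. For each $R>e$, I would set
\[
u_R(x) = \begin{cases} 1 & \text{if } |x| \leq 1, \\ 1 - \dfrac{\log|x|}{\log R} & \text{if } 1 \leq |x| \leq R, \\ 0 & \text{if } |x| \geq R, \end{cases}
\]
which lies in $W^{1,d}(\R^d)$ and is the standard logarithmic capacitary profile. A direct polar-coordinate computation gives
\[
\|\nabla u_R\|_d^d = \frac{\omega_d}{(\log R)^{d-1}},
\]
and the factor $(\log R)^{-(d-1)}$ is precisely what produces the $\alpha^{1/(d-1)}$ scaling in the final log. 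Since $0\le u_R\le 1$ and $u_R\to 1$ pointwise as $R\to\infty$, the dominated convergence theorem (using only $V\in L^1(\R^d)$) yields $\int_{\R^d} V u_R^d\,dx \to I := \int_{\R^d} V\,dx>0$.

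For the denominator $\|u_R\|_d^d$, I would substitute $t = 1 - \log|x|/\log R$ in the annulus $1\le|x|\le R$ and recognise the resulting one-dimensional integral as a truncated Gamma integral. Comparing with $\int_0^\infty t^d e^{-dt\log R}\,dt = \Gamma(d+1)/(d\log R)^{d+1}$ gives an explicit constant $K_d$ such that
\[
\|u_R\|_d^d \leq K_d\, \frac{R^d}{(\log R)^d} \qquad \text{for all } R \text{ large.}
\]
Thus the gradient term is small like $(\log R)^{-(d-1)}$ while the $L^d$ mass is large like $R^d(\log R)^{-d}$, which is the balance that produces exponential decay of $|\lambda(\alpha V)|$.

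Now I would fix an arbitrary $\eta\in(0,1)$ and calibrate $R=R(\alpha)$ by demanding $(\log R)^{d-1} = \omega_d/((1-\eta)^2\alpha I)$, so that $\|\nabla u_R\|_d^d = (1-\eta)^2\alpha I$. For $\alpha$ small enough that $R$ is large enough, the dominated-convergence step also gives $\int V u_R^d\,dx \geq (1-\eta) I$, and therefore
\[
Q_{\alpha V}[u_R] \leq (1-\eta)^2 \alpha I - (1-\eta)\alpha I = -\eta(1-\eta)\alpha I<0.
\]
Combining with the $\|u_R\|_d^d$ bound, $|\lambda(\alpha V)| \geq \eta(1-\eta)\alpha I (\log R)^d/(K_d R^d)$. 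Taking logarithms, using $\log R \sim ((1-\eta)^{-2}\omega_d/I)^{1/(d-1)}\,\alpha^{-1/(d-1)}$, and absorbing the subleading terms $\log\log R$ and $\log\alpha^{-1}$, I obtain
\[
\alpha^{1/(d-1)}\,\log\frac{1}{|\lambda(\alpha V)|} \leq \frac{d\,\omega_d^{1/(d-1)}}{(1-\eta)^{2/(d-1)}\, I^{1/(d-1)}} + o(1)
\]
as $\alpha\to 0^+$. Passing to $\limsup_{\alpha\to0^+}$ and then letting $\eta\to 0^+$ yields the desired inequality \eqref{lowerb-limit}.

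The main obstacle is the accounting of lower-order terms in the two-parameter limit $\alpha\to 0$, $\eta\to 0$: one must be sure that $\log R$ really dominates $\log\log R$ and $\log(1/\alpha)$ uniformly in the chosen parametrisation, and that the convergence $\int V u_R^d\,dx\to I$ happens at the specific $R(\alpha)$ used. Both facts are clean consequences of $V\in L^1$ together with $\|u_R\|_\infty\le 1$, which is the only reason the hypotheses of Proposition \ref{lowerb-prop} suffice for the upper bound; the extra integrability $V\in L^q$ required in Theorem \ref{thm-main-crit} is needed only for the matching lower bound, not here.
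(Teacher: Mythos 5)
Your proof is correct and takes essentially the same approach as the paper: the paper uses exactly the same logarithmic test functions, estimates their gradient via the two-ball capacity formula (or directly), bounds $\|v_\beta\|_d^d\le c\,\beta^d$ (you keep the slightly sharper $R^d(\log R)^{-d}$, but the extra $\log\log R$ is absorbed into lower-order terms anyway), uses dominated convergence on $\int V u_R^d$, calibrates $R(\alpha)$ by the same balance condition, and passes to the limit $\eta\to 0$ of an auxiliary parameter at the end.
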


\begin{proof}
Let $\beta > 1$ and consider the family of test functions $v_\beta$ defined by
\begin{equation}
v_\beta(x)  = 1 \quad \text{if\, \, } |x| \leq 1, \qquad 
v_\beta(x)  =
\left(1- \frac{\log |x|}{\log \beta}\right)_+ \quad \text{if\,
} |x| >1 \,.
\end{equation}
Then $v_\beta \in W^{1,d}(\R^d)$ and, since $0\leq v_\beta\leq \chi_{\{|\cdot|<\beta\}}$, we have
$$
\|v_\beta\|_d^d \ \leq \,c\, \beta^d
$$
for all $\beta>1$ with a constant $c>0$ depending only on $d$. Moreover,
$$
Q_{\alpha V}[v_\beta] \leq \omega_d\,
(\log \beta)^{1-d} -\alpha\, \int_{\R^d} V(x)\, dx + \alpha R_\beta
$$
with
$$
R_\beta = \int_{\{|x|>1\}} V_+ \left( 1- \left( 1 - \frac{\log |x|}{\log \beta} \right)_+ \right) dx \,.
$$
By dominated convergence, $R_\beta \to 0$ as $\beta\to\infty$.

Let $\eps>0$ be given and choose $\beta_\eps>1$ such that
$$
R_\beta \leq \eps \int_{\R^d} V \,dx
\qquad \text{for all}\ \beta\geq \beta_\eps \,.
$$
Now, for any $\alpha$, define
$$
\beta(\alpha) = \exp\left( \left(\frac{\omega_d}{\alpha(1-\epsilon)\int_{\R^d} V\,dx} \right)^{1/(d-1)} \right) \,.
$$
Note that $\beta(\alpha)>1$ and that
$$
\frac{\omega_d}{(\log\beta(\alpha))^{d-1}} - \alpha(1-\eps) \int_{\R^d} V\,dx = 0 \,.
$$
Define $\alpha_\eps>0$ by $\beta(\alpha_\eps)= \beta_\eps$. Then for $\alpha\leq\alpha_\eps$ our upper bound on $Q_{\alpha V}[v_\beta]$ is non-positive and therefore
\begin{align} \label{first-ub}
\lambda(\alpha V) & \leq \frac{Q_{\alpha V}[v_{\beta(\alpha)}]}{\|u_{\beta(\alpha)}\|_d^d} \notag \\
& \leq \, c^{-1} \, \beta(\alpha)^{-d}\, \left(\omega_d\,
(\log \beta(\alpha))^{1-d} -\alpha\, \int_{\R^d} V(x)\,dx +\alpha R_\beta \right) \notag \\
& = - c^{-1} \alpha \left( \epsilon \int_{\R^d} V\,dx -R_{\beta(\alpha)} \right) \exp\left( -d \left(\frac{\omega_d}{\alpha(1-\epsilon)\int_{\R^d} V\,dx} \right)^{1/(d-1)} \right)      .
\end{align}
This implies 
$$
\limsup_{\alpha\to 0+}\, \alpha^{\frac{1}{d-1}}\, \log \frac1{|\lambda(\alpha V)|} \, \leq \, d\, \omega_d^{\frac{1}{d-1}}\, \Big( (1-\eps)\int_{\R^d} V(x)\, dx\Big)^{-\frac{1}{d-1}}.
$$
By letting $\eps\to 0$ we arrive at \eqref{lowerb-limit}.  
\end{proof}

\begin{cor} \label{cor-local}
Let $V$ satisfy assumptions of Lemma \ref{bounded:critical}. Then
for every $\alpha>0$  there exists a locally bounded positive function
$u_\alpha\in W^{1,d}(\R^d)$ such that 
$\lambda(\alpha V)\, \|u_\alpha\|_d^d = Q_{\alpha V}[u_\alpha]$.
\end{cor}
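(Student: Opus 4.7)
The plan is to apply the direct method of the calculus of variations to the ratio $Q_{\alpha V}[u]/\|u\|_d^d$. Before extracting a minimizing sequence I would first verify that $\lambda(\alpha V)<0$ for every $\alpha>0$; this is built into Proposition~\ref{lowerb-prop}, because the test functions $v_\beta$ constructed there satisfy $Q_{\alpha V}[v_\beta]<0$ once $\beta$ is taken large enough (the leading term $\omega_d(\log\beta)^{1-d}$ tends to zero while the remaining contributions collapse to $-\alpha\int V<0$).

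Next, let $(u_j)\subset W^{1,d}(\R^d)$ be a minimizing sequence normalized by $\|u_j\|_d=1$, so that $Q_{\alpha V}[u_j]\to\lambda(\alpha V)<0$. The lower bound derived inside the proof of Lemma~\ref{bounded:critical} gives
$$
Q_{\alpha V}[u_j]\,\geq\,\|\nabla u_j\|_d^d-\alpha\,\|V_+\|_q\,\tilde{\mathcal S}_{d,r}\,\|\nabla u_j\|_d^{d\theta}
$$
with $\theta<1$; since the left-hand side is bounded, so is $\|\nabla u_j\|_d$. Hence $(u_j)$ is bounded in $W^{1,d}(\R^d)$ and, after passing to a subsequence, $u_j\rightharpoonup u$ weakly to some $u\in W^{1,d}(\R^d)$.

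The core step, and the one I expect to be the main obstacle, is showing that the weak limit $u$ is not identically zero. For this I would appeal directly to the weak continuity of the functional $w\mapsto\int_{\R^d}V|w|^d\,dx$ on $W^{1,d}(\R^d)$, which is contained inside the proof of Lemma~\ref{bounded:critical}. Indeed, if $u\equiv 0$ then $\int V|u_j|^d\,dx\to 0$, which combined with the identity $\|\nabla u_j\|_d^d=Q_{\alpha V}[u_j]+\alpha\int V|u_j|^d\,dx$ would force $\|\nabla u_j\|_d^d\to\lambda(\alpha V)<0$, a contradiction. Hence $u\not\equiv 0$. Weak lower semicontinuity of $Q_{\alpha V}$ together with $\|u\|_d\leq 1$ then gives
$$
\lambda(\alpha V)\,\leq\,\frac{Q_{\alpha V}[u]}{\|u\|_d^d}\,\leq\,\frac{\liminf_j Q_{\alpha V}[u_j]}{\|u\|_d^d}\,=\,\frac{\lambda(\alpha V)}{\|u\|_d^d}\,,
$$
and since $\lambda(\alpha V)<0$ this forces $\|u\|_d=1$ and $Q_{\alpha V}[u]=\lambda(\alpha V)$. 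Replacing $u$ by $|u|$ (which leaves the $L^d$-norm unchanged and does not increase the Dirichlet term, cf.~\cite[Thm.~6.17]{ll}) produces a non-negative minimizer $u_\alpha$.

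Local boundedness and strict positivity of $u_\alpha$ follow from standard quasilinear elliptic regularity applied to the Euler--Lagrange equation
$$
-\Delta_p u_\alpha\,=\,\bigl(\alpha V+\lambda(\alpha V)\bigr)\,u_\alpha^{d-1},
$$
whose right-hand side lies in $L^{q}_{\mathrm{loc}}$: local $L^\infty$ bounds of Serrin--Trudinger type (see \cite{s,s2,tr}) yield $u_\alpha\in L^\infty_{\mathrm{loc}}(\R^d)$, and the Harnack inequality for non-negative weak solutions of the $p$-Laplace equation together with the strong maximum principle (\cite{tr,ps,gs,ptt}) upgrades $u_\alpha\geq 0$ to $u_\alpha>0$. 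The only genuinely delicate points are the non-vanishing of the weak limit and the clean invocation of the regularity/Harnack machinery; the rest is direct-method boilerplate.
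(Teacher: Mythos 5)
Your proposal is correct and follows essentially the same route as the paper: Proposition~\ref{lowerb-prop} to get $\lambda(\alpha V)<0$, the direct method with the lower bound and weak lower semi-continuity from Lemma~\ref{bounded:critical} to extract a non-negative minimizer (exactly as in Corollary~\ref{cor-minimizer}), and Serrin's Harnack inequality for local boundedness and strict positivity. The only differences are cosmetic: you isolate the non-vanishing of the weak limit as a separate contradiction argument, whereas the paper packages it into the chain $0>\lambda(\alpha V)=\lim_j Q_{\alpha V}[u_j]\geq Q_{\alpha V}[u]\geq\lambda(\alpha V)\|u\|_d^d\geq\lambda(\alpha V)$, and you invoke the strong maximum principle in addition to Harnack, which is harmless but unnecessary since Serrin's Harnack inequality alone already upgrades a non-trivial non-negative solution to a locally bounded, strictly positive one.
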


\begin{proof}
Inequality \eqref{first-ub} with $\beta$ large enough shows that  $\lambda(\alpha V)<0$ for all  $\alpha>0$. 
Hence the existence of a non-negative minimizer $u_\alpha$ follows from Lemma \ref{bounded:critical} in the same way as
in the case $d <p$. Since $u_\alpha$ is a non-negative weak solution of \eqref{eq:groundstate}, the Harnack inequality \cite[Thm. 6]{s} implies that $u_\alpha$ is locally bounded and positive. 
\end{proof}


\subsection{Proof of Theorem \ref{thm-main-crit}. Lower bound}


\subsubsection*{The case of positive $V$}

\begin{prop}\label{lowerb-lemmapos}
Assume that $0\leq V\in L^q(\R^d)\cap L^1(\R^d)$ for some $q>1$ with $V\not\equiv 0$. Then there are $\alpha_0>0$ and $C>0$ such that for all $0<\alpha\leq\alpha_0$ we have 
\begin{equation} \label{lower-bpos}
\lambda(\alpha V) \geq - C \, \alpha^{-1}\, \exp\left[- \left(\frac{d^{d-1}\, \omega_d}{ \alpha\, \int_{\R^d} V \,dx } \right)^{\frac{1}{d-1}}\right] \,.
\end{equation}
\end{prop}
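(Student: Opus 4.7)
My plan is to establish the stronger inequality $|\lambda(\alpha V)| \leq C e^{-E(\alpha)}$ where $E(\alpha) = (d^{d-1}\omega_d/(\alpha\int_{\R^d} V))^{1/(d-1)}$; since $\alpha^{-1} \geq 1$ for small $\alpha$, this implies the stated bound. The first step is rearrangement: since $V \geq 0$ and test functions may be taken non-negative, the P\'olya--Szeg\H{o} and Hardy--Littlewood inequalities give $Q_{\alpha V}[u] \geq Q_{\alpha V^*}[u^*]$, hence $\lambda(\alpha V) \geq \lambda(\alpha V^*)$. I may thus assume throughout that $V = V(r)$ is radial decreasing and work with radial decreasing trial functions $u$ normalized by $\|u\|_d = 1$.

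The core estimate couples a capacitary (logarithmic Sobolev) inequality with a convexity trick. For $0 < r_0 < R$, writing $u(r_0) - u(R) = -\int_{r_0}^R u'(s)\,ds$ and applying H\"older's inequality with exponents $d$ and $d/(d-1)$ gives
$$
u(r_0) \,\leq\, u(R) + \omega_d^{-1/d}\,\|\nabla u\|_d\,\bigl(\log(R/r_0)\bigr)^{(d-1)/d}.
$$
Using the convex split $(a+b)^d \leq (1-\delta)^{1-d} a^d + \delta^{1-d} b^d$ for $\delta \in (0,1)$, raising to the $d$-th power, integrating against $V$, and bounding $u(r) \leq u(R)$ for $r > R$, I obtain
$$
\int_{\R^d} V u^d\,dx \,\leq\, (1-\delta)^{1-d} u(R)^d \|V\|_1 + \delta^{1-d} \|\nabla u\|_d^d\, J(R),
$$
where $J(R) = \int_0^R V(r) (\log(R/r))^{d-1} r^{d-1}\,dr$. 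Combined with the Chebyshev bound $u(R)^d \leq d/(\omega_d R^d)$, this yields
$$
Q_{\alpha V}[u] \,\geq\, \|\nabla u\|_d^d \bigl(1 - \delta^{1-d}\alpha J(R)\bigr) - \frac{(1-\delta)^{1-d}\, d\,\alpha\,\|V\|_1}{\omega_d R^d}.
$$

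It remains to optimize over $R$ and $\delta$. The hypothesis $V \in L^q \cap L^1$ with $q > 1$ guarantees $\int V |\log r|^{d-1} r^{d-1}\,dr < \infty$ (by H\"older near the origin), and then dominated convergence delivers the sharp asymptotic
$$
J(R) = \frac{\|V\|_1}{\omega_d}(\log R)^{d-1}\bigl(1 + O(1/\log R)\bigr) \qquad (R \to \infty).
$$
I choose $R$ so as to saturate $\alpha J(R) = \delta^{d-1}$ (killing the gradient term), which produces $d\log R = \delta E(\alpha) + O(1)$. Taking $\delta = 1 - 1/E(\alpha)$ then makes $\delta E(\alpha) = E(\alpha) - 1$, while $(1-\delta)^{1-d} = E(\alpha)^{d-1}$. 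The algebraic identity $\alpha E(\alpha)^{d-1} = d^{d-1}\omega_d/\|V\|_1$, independent of $\alpha$, collapses all prefactors to a single constant and produces $Q_{\alpha V}[u] \geq -C_d e^{-E(\alpha)}$ uniformly in $u$, giving the proposition.

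The principal obstacle is securing the sharp coefficient $\|V\|_1/\omega_d$ in the asymptotic of $J(R)$: any larger constant would degrade the exponential rate and fail to deliver the correct asymptotic constant in Theorem \ref{thm-main-crit}. This is exactly where the $L^q$-integrability (beyond $L^1$) of $V$ enters, for the expansion of $(\log(R/r))^{d-1}$ around $(\log R)^{d-1}$ produces the weight $|\log r|^{d-1}$, which must be integrable against $V(r)r^{d-1}$ near $r=0$ in order to apply dominated convergence with the correct leading constant.
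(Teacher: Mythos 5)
Your argument is correct and follows a genuinely different route from the paper's. The paper works with the radial decreasing minimizer $u_\alpha$ of $Q_{\alpha V^*}$, normalizes it so that $u_\alpha\equiv 1$ on the sphere $|x|=\rho$, and derives the crucial oscillation bound $\sup_{B_\rho}(u_\alpha^d-1)\leq C\alpha^{1/(d-1)}$ by combining the radial Euler--Lagrange ODE with Alvino's sharp Moser--Trudinger inequality; a cutoff/capacity estimate for the kinetic energy of $u_\alpha$ then closes the argument. You instead bound $Q_{\alpha V^*}[u]$ from below directly for \emph{arbitrary} radial decreasing $u$ with $\|u\|_d=1$, using the pointwise capacitary estimate $u(r)\leq u(R)+\omega_d^{-1/d}\|\nabla u\|_d(\log(R/r))^{(d-1)/d}$, the Jensen split of the $d$-th power, and the Chebyshev bound $u(R)^d\leq d/(\omega_d R^d)$; optimization over $R$ and $\delta$ then delivers the bound with no reference to the minimizer, the PDE, or Moser--Trudinger. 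Your route is more elementary and even yields the marginally stronger conclusion $\lambda(\alpha V)\geq -C\,e^{-E(\alpha)}$ (no $\alpha^{-1}$ prefactor), which of course implies the stated estimate.

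One small imprecision worth fixing: dominated convergence gives only $J(R)=\frac{\|V\|_1}{\omega_d}(\log R)^{d-1}(1+o(1))$, not a rate $O(1/\log R)$, and a two-sided $O(1/\log R)$ rate is not true in general (the tail $\int_R^\infty V\,r^{d-1}dr$ can decay arbitrarily slowly). What the argument actually needs, however, is only the one-sided upper bound
$J(R)\leq \frac{\|V\|_1}{\omega_d}(\log R)^{d-1}+M(\log R)^{d-2}$ for $\log R\geq 1$, with $M=\sum_{k=1}^{d-1}\binom{d-1}{k}\int_0^1 V(r)|\log r|^k r^{d-1}\,dr$, which follows by expanding $(\log R+|\log r|)^{d-1}$ binomially for $r<1$, bounding $(\log(R/r))^{d-1}\leq(\log R)^{d-1}$ for $r\geq 1$, and using $\int_0^1 V|\log r|^k r^{d-1}dr<\infty$ (Hölder with $V\in L^q$). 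Since $L^{d-1}+cL^{d-2}\leq (L+c)^{d-1}$, this upper bound on $J$ yields the needed \emph{lower} bound $d\log R\geq\delta E(\alpha)-O(1)$ for the $R$ defined by $\alpha J(R)=\delta^{d-1}$, which is precisely what feeds into the final estimate $R^{-d}\leq C e^{-\delta E(\alpha)}$. So simply replace the appeal to dominated convergence by this explicit one-sided bound on $J$ and the proof is complete.
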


\begin{proof}
Let $V^*$ be the symmetric decreasing rearrangement of $V$. 
Since $\int_{\R^d} V\,dx = \int_{\R^d} V^*\,dx$, $\int_{\R^d} V^q \,dx = \int_{\R^d} (V^*)^q\,dx$ and, by rearrangement inequalities (see, e.g.,~\cite{tal} and \cite[Thm. 3.4]{ll}),
$$
\lambda(\alpha V) \geq \lambda(\alpha V^*) \,,
$$
we may and will assume in the following that $V=V^*$.

By Corollary \ref{cor-local} there is a minimizer $u_\alpha$ of $Q_{\alpha V}[u]/\|u\|_d^d$. Again, by rearrangement inequalities, we may assume that $u_\alpha$ is a radially symmetric function which is non-increasing with respect to the radius. Let $\rho>0$ be an arbitrary parameter. (In this proof there is no loss in assuming that $\rho=1$, but in the proof of Proposition \ref{lowerb-lemmagen} we will repeat the argument with a general $\rho$.) We normalize $u_\alpha$ such that
$$
u_\alpha(x) = u_\alpha(|x|) =1, \qquad \text{for all}\ x\in\R^d\ \text{with}\ |x|=\rho \,.
$$
Let $R\geq 2\rho$ be a parameter to be specified later and let $\chi$ be defined by
$$
\chi(r) = 1 \quad \text{if \ } 0\leq r \leq \rho, \qquad \chi(r) = \Big(1-\frac{r-\rho}{R-\rho}\Big)_+ \quad \text{if \ } r>\rho \,.
$$
Then for any $\eps \in(0,1]$ we have
\begin{align*}
\|\nabla( \chi\, u_\alpha)\|_d^d  & \leq (1+\eps) \|\chi\, \nabla u_\alpha\|_d^d +c\, \eps^{1-d}\, \| u_\alpha\, \nabla \chi\|_d^d \\
& \leq  (1+\eps) \| \nabla u_\alpha\|_d^d +c'\, \eps^{1-d}\, R^{-d}\,  \| u_\alpha\|_d^d \,,
\end{align*}
and therefore
\begin{equation}
\|\nabla u_\alpha\|_d^d \geq \|\nabla (\chi\, u_\alpha)\|_d^d/(1+\eps)  - c''\,  \eps^{1-d}\, R^{-d}\, \|u_\alpha\|_d^d \,.
\end{equation}
Since $\chi\, u_\alpha$ has support in the ball of radius of radius $R$ and is bounded from below by one on the ball of radius $\rho$, the formula for the capacity of two nested balls \cite[Sec. 2.2.4]{m} gives 
\begin{equation}\label{kinetic-lowerbpos}
\|\nabla u_\alpha\|_d^d \geq \, \frac{\omega_d\, ( \log (R/\rho))^{1-d}}{1+\eps} - c''\,  \eps^{1-d}\, R^{-d}\, \|u_\alpha\|_d^d \,.
\end{equation}
Moreover, since $|u_\alpha(x)| \leq 1$ for $|x| >1$, we obtain
\begin{equation} \label{first-lowbpos}
\lambda(\alpha V) \geq  \frac{\omega_d\, 
(\log (R/\rho) )^{1-d} - (1+\eps)\, \alpha \left( \int_{B_1} V u_\alpha^d \,dx + \int_{B_1^c} V \,dx \right)}{(1+\eps)\, \|u_\alpha\|_d^d} 
- \frac{c''}{\eps^{d-1}\, R^{d}} \,.
\end{equation}

We next claim that there are constants $C>$ and $\alpha_0>0$ such that for all $0<\alpha\leq \alpha_0$,
\begin{equation}
\label{eq:oscilcritpos}
\sup_{B_\rho} \left( u_\alpha^d -1 \right) \leq C \alpha^{\frac{1}{d-1}} \,.
\end{equation}
Accepting this for the moment and returning to \eqref{first-lowbpos} we obtain
$$
\lambda(\alpha V) \geq  \frac{\omega_d\, 
(\log  (R/\rho) )^{1-d} - (1+\eps)\left(1+ C \alpha^{\frac{1}{d-1}}\right) \, \alpha \int_{\R^d} V \,dx}{(1+\eps)\, \|u_\alpha\|_d^d} 
- \frac{c''}{\eps^{d-1}\, R^{d}} \,.
$$
For given $0<\epsilon\leq 1$ and $0<\alpha\leq\alpha_0$ we now choose
$$
R = \rho \exp \left( \left( \frac{\omega_d}{(1+\eps)\left(1+ C \alpha^{\frac{1}{d-1}}\right) \, \alpha \int_{\R^d} V \,dx} \right)^{\frac{1}{d-1}} \right)
$$
so that
$$
\lambda(\alpha V) \geq
- \frac{c''}{\eps^{d-1} \rho^d}\, \exp \left(-d \left( \frac{\omega_d}{(1+\eps)\left(1+ C \alpha^{\frac{1}{d-1}}\right) \, \alpha \int_{\R^d} V \,dx} \right)^{\frac{1}{d-1}} \right)   \,.
$$
Finally, we choose $\epsilon=C \alpha^{\frac{1}{d-1}}$ to obtain
\begin{equation}
\lambda(\alpha V) \geq 
- \frac{c'''}{\alpha}\, \exp \left(-d \left( \frac{\omega_d}{\left(1+ C' \alpha^{\frac{1}{d-1}}\right) \, \alpha \int_{\R^d} V \,dx} \right)^{\frac{1}{d-1}} \right) \,.
\end{equation}
Up to increasing $c'''$ this implies the statement of the proposition.

Thus, it remains to prove \eqref{eq:oscilcritpos}. For simplicity we give the proof only for $\rho=1$ (which is enough for the proof of the proposition). We apply Alvino's version of the Moser--Trudinger inequality \cite{alv} to the function $u_\alpha-1$ and obtain
\begin{equation} \label{mos-trpos}
0 \, < \, u_\alpha(r) -1 \, \leq \, C\, \| \nabla u_\alpha\|_{L^d(B_1)} \, |\log r|^{\frac {d-1} d}\,, \qquad r\leq 1.
\end{equation}
Using this upper bound on $u_\alpha$ we arrive at
\begin{align*}
\|\nabla u_\alpha \|^d_{L^d(B_1)} & \leq \|\nabla u_\alpha\|_d^d \\
& \leq \alpha \int_{\R^d} V |u_\alpha|^d \,dx \\
& \leq \alpha 2^{d-1} \left(  \| V \|_{L^1(B_1)} + C \|\nabla u_\alpha \|^d_{L^d(B_1)} \omega_d \int_0^1 V(r) | \log r |^{d-1} r^{d-1} \, dr \right) .
\end{align*}
The assumption $V\in L^q(\R^d)$ for some $q>1$ implies that $V \in L^1(B_1, |\log |x| |^{d-1} \, dx)$, and therefore there is a $C'>0$ and an $\alpha_0>0$ such that for all $0<\alpha \leq\alpha_0$
$$
\|\nabla u_\alpha \|_{L^d(B_1)}^d \leq C' \, \alpha^{1/d} \,.
$$
Re-inserting this into \eqref{mos-trpos}, we find for all $0<\alpha \leq\alpha_0$
\begin{equation} \label{mos-tr-2}
0 \, < \, u_\alpha(r) -1 \, \leq \, C'' \,  \alpha^{1/d} \, |\log r|^{\frac {d-1} d}\,, \qquad r\leq 1.
\end{equation}
Hence the minimizer $u_\alpha$  satisfies for all $0<r\leq 1$,
\begin{align}
((-r\, u_\alpha' (r))^{d-1})' & = \alpha \, V(r)\, u_\alpha(r)^{d-1} \, r^{d-1} + \lambda(\alpha)\,  u_\alpha(r)^{d-1} r^{d-1}  \label{eq:radial} \\
& \leq \alpha \, V(r)\,  r^{d-1}\, \big( 1+C''\alpha^{\frac1{d}}\,  |\log r|^{\frac {d-1} d}\big)^{d-1} \nonumber
\end{align}
and
\begin{align}
((-r\, u_\alpha' (r))^{d-1})' & = \alpha \, V(r)\, u_\alpha(r)^{d-1} \, r^{d-1} + \lambda(\alpha)\,  u_\alpha(r)^{d-1} r^{d-1}  \label{eq:radial2} \\
& \geq \lambda(\alpha)\, r^{d-1} \big( 1+C''\alpha^{\frac1{d}}\,  |\log r|^{\frac {d-1} d}\big)^{d-1} \nonumber \,.
\end{align}
Since the right hand sides of \eqref{eq:radial} and \eqref{eq:radial2} are integrable with respect to $r$ (for \eqref{eq:radial} we use here again the assumption that $V\in L^1(\R^d)\cap L^q(\R^d)$ for some $q>1$), the function $(-r\, u_\alpha' (r))^{d-1}$ has a finite limit as $r\to 0$.  Since $u_\alpha \in W^{1,d}(\R^d)$, it follows that  this limit must be zero. Thus, from \eqref{eq:radial} we get for all $0<r\leq 1$
\begin{align*}
(-r\, u_\alpha' (r))^{d-1} & \leq \alpha \int_0^r V(s) \, s^{d-1}\,  
\big( 1+C''\alpha^{\frac1{d}}\,  |\log s|^{\frac {d-1} d}\big)^{d-1} \,ds \\
& \leq \alpha \|V\|_{L^q(B_1)} \left( \int_0^r \, s^{d-1}\,  
\big( 1+C''\alpha^{\frac1{d}}\,  |\log s|^{\frac {d-1} d}\big)^{q'(d-1)} \,ds \right)^{1/q'} \\
& \leq C''' \alpha \|V\|_{L^q(B_1)} r^{d/q'} \left(1+ |\log r| \right)^{\frac{(d-1)^2}{d}} \,.
\end{align*}
Finally, this implies that
\begin{align*}
u_\alpha(r) - 1 & = -\int_r^1 u_\alpha'(s) \, ds \\
& \leq \left( C''' \alpha \|V\|_{L^q(B_1)} \right)^{\frac 1{d-1}} 
\int_r^1 s^{\frac d{q'(d-1)}} \left(1+ |\log s| \right)^{\frac{(d-1)}{d}} \frac{ds}{s} \,.
\end{align*}
Since the integral on the right side converges, we have shown \eqref{eq:oscilcritpos}. This completes the proof of the lemma.
\end{proof}


\subsubsection*{The case of compactly supported $V$}

\begin{prop} \label{lowerb-lemmagen}
Let $V$ be a function with compact support, $\int_{\R^d} V(x)\, dx  >0$ and $V\in L^q(\R^d)$ for some $q>1$. Then there are $\alpha_0>0$ and $C>0$ such that for all $0<\alpha\leq\alpha_0$ we have 
\begin{equation} \label{lower-bgen}
\lambda(\alpha V) \geq - \exp\left[- \left(\frac{d^{d-1}\, \omega_d}{ \alpha\, \int_{\R^d} V \,dx\ (1+C\alpha^{\frac1{d}}) } \right)^{\frac{1}{d-1}}\right] \,.
\end{equation}
\end{prop}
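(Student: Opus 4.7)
The plan is to follow the argument of Proposition \ref{lowerb-lemmapos} closely, the essential new difficulty being the absence of radial symmetry: since $V$ is not assumed non-negative we cannot pass to the symmetric decreasing rearrangement, so the positive minimizer $u_\alpha$ from Corollary \ref{cor-local} need not be radial, and Alvino's Moser--Trudinger inequality no longer applies to a radial profile. The role of the symmetric rearrangement will be played by the compact support hypothesis on $V$ together with the quantitative interior H\"older regularity of Lemma \ref{lem-hoelder}.

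Concretely, fix $\rho>0$ with $\supp V \subset B_{\rho/2}$ and normalize the minimizer $u_\alpha$ from Corollary \ref{cor-local} so that $\sup_{\partial B_\rho} u_\alpha = 1$. The crucial step is to establish the oscillation estimate
\begin{equation*}
\sup_{B_\rho} |u_\alpha^d - 1| \leq C\,\alpha^{1/d}, \qquad \alpha \in (0,\alpha_0],
\end{equation*}
which replaces the radial bound \eqref{eq:oscilcritpos}. The remainder of the proof is then a direct adaptation of Proposition \ref{lowerb-lemmapos}: using a radial cutoff $\chi$ equal to $1$ on $B_\rho$ and supported in $B_R$ one obtains, for any $\eps \in (0,1]$, the capacity bound
\begin{equation*}
\|\nabla u_\alpha\|_d^d \geq \frac{\omega_d (\log(R/\rho))^{1-d}}{1+\eps} - c\,\eps^{1-d} R^{-d}\,\|u_\alpha\|_d^d,
\end{equation*}
while the compactness of $\supp V$ together with the oscillation estimate gives
\begin{equation*}
\alpha \int_{\R^d} V\,|u_\alpha|^d\,dx \leq \alpha\int_{\R^d} V\,dx + C\,\alpha^{1+1/d}\,\|V\|_1.
\end{equation*}
Inserting these into $\lambda(\alpha V)\,\|u_\alpha\|_d^d = Q_{\alpha V}[u_\alpha]$, choosing $R$ so as to balance the two leading contributions on the right (exactly as in Proposition \ref{lowerb-lemmapos}), and then picking $\eps \sim \alpha^{1/d}$, yields \eqref{lower-bgen}.

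The main obstacle is the oscillation estimate. To prove it, one first derives a kinetic-energy a priori bound of the form $\|\nabla u_\alpha\|_d^d = O(\alpha^{1/d})$ by testing the equation against $u_\alpha$ and iteratively using the $L^q$ bound on $V$, in analogy with the derivation of \eqref{mos-tr-2}. One then applies Lemma \ref{lem-hoelder} to the equation \eqref{eq:groundstate} satisfied by $u_\alpha$ on $B_\rho$, whose zero-order coefficient is $\alpha V + |\lambda(\alpha V)|$; the explicit dependence of the H\"older constant on the coefficients, combined with the exponentially small upper bound for $|\lambda(\alpha V)|$ provided by Proposition \ref{lowerb-prop}, ensures that the H\"older norm of $u_\alpha$ on $B_\rho$ vanishes with $\alpha$ at the required rate $\alpha^{1/d}$. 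This is the step where the hypothesis $V\in L^q(\R^d)$ for some $q>1$ is genuinely used.
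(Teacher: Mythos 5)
Your overall structure matches the paper's proof: normalize the positive minimizer from Corollary \ref{cor-local} on a ball containing $\supp V$, use the capacity argument for the cutoff $\chi u_\alpha$, and control the potential term via the oscillation estimate on $\supp V$, which comes from the quantitative H\"older bound of Lemma \ref{lem-hoelder} applied to the zero-order coefficient $W=\alpha V+\lambda(\alpha V)$. That is indeed the paper's strategy. However, two points in your write-up are off.

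First, and most seriously, you invoke ``the exponentially small upper bound for $|\lambda(\alpha V)|$ provided by Proposition \ref{lowerb-prop}.'' Proposition \ref{lowerb-prop} is an \emph{upper} bound on $\lambda(\alpha V)$, i.e.\ a \emph{lower} bound on $|\lambda(\alpha V)|$; it tells you $|\lambda(\alpha V)|$ is at least (roughly) $e^{-c\alpha^{-1/(d-1)}}$, which is useless for making the coefficient $W=\alpha V+\lambda(\alpha V)$ small in $L^q(B_{5\rho})$. What you actually need is an upper bound on $|\lambda(\alpha V)|$, and the correct (and quite elementary) one is the $O(\alpha)$ estimate that follows from the lower boundedness proved in Lemma \ref{bounded:critical}: $|\lambda(\alpha V)|\leq C\,\alpha\, \|V_+\|_q^{q/(q-1)}$. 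With that, $\|W\|_{L^q(B_{5\rho})}=O(\alpha)$, and Lemma \ref{lem-hoelder} gives the oscillation $\sup_{B_\rho}|u_\alpha-1|\lesssim \|W\|_{L^q(B_{5\rho})}^{1/d}\lesssim \alpha^{1/d}$. Your stated preliminary step (a separate kinetic-energy bound $\|\nabla u_\alpha\|_d^d=O(\alpha^{1/d})$) is not used anywhere in this chain and is therefore superfluous; moreover, its claimed rate is not what that computation would naturally produce.

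Second, the normalization $\sup_{\partial B_\rho}u_\alpha=1$ does not directly give the lower bound $\chi\, u_\alpha\geq 1$ on $B_\rho$ that the capacity step requires. The paper normalizes by $\inf_{B_\rho}u_\alpha=1$, which immediately yields $u_\alpha\geq 1$ on $B_\rho$ (and also $\inf_{B_{5\rho}}u_\alpha\leq 1$, the hypothesis of Lemma \ref{lem-hoelder}). With your normalization you would first have to use the oscillation estimate to conclude $u_\alpha\geq 1-C\alpha^{1/d}$ on $B_\rho$ and carry the extra factor $(1-C\alpha^{1/d})^d$ through the capacity bound; that is harmless but should be said, and it makes the argument slightly more tangled than the paper's choice.
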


Similarly as in the case $d < p$ a key ingredient in the proof is to show that minimizers, when suitably normalised, converge locally to a constant function. In the case $d<p$ we deduced this from Morrey's inequality. Here the argument is considerably more complicated and based on Harnack's inequality for quasi-linear equations. We shall prove

\begin{lem} \label{lem-hoelder}
For each $d\in\N$, $q>1$ and $M>0$ there are constants $C>0$ and $\beta\in (0,1)$ with the following property. Let $\rho>0$ and assume that $W\in L^q_{\mathrm loc}(\R^d)$ with $W \leq 0$ in $B_{5\rho}^c$ and $\rho^{d-\frac dq} \|W\|_{L^q(B_{15\rho})}\leq M$. Then, if $u\in W^{1,d}(\R^d)$ is a positive, weak solution of the equation $-\Delta_d(u)=W u^{d-1}$ in $\R^d$ satisfying $\inf_{B_{5\rho}} u\leq 1$ and if $y\in\R^d$ and $r>0$ are so that $B(3r,y)\subset B_{3\rho}$, we have
\begin{equation} \label{hoelder-cont}
\sup_{B(r,y)} u - \inf_{B(r,y)} u \leq C\, \|W\|_{L^q(B_{5\rho})}^{1/d} \ \rho^{1-\frac{1}{q}-\beta}\ r^\beta \,.
\end{equation} 
\end{lem}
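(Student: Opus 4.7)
\medskip

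\noindent\textbf{Proof plan for Lemma \ref{lem-hoelder}.} The strategy is a quantitative version of the classical De~Giorgi--Nash--Moser scheme for quasilinear equations, combining Harnack's inequality (to produce an $L^\infty$ bound) with an oscillation decay (to upgrade it to H\"older continuity), keeping track of the dependence on the $L^q$ norm of the coefficient~$W$.

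\smallskip

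\noindent\emph{Step 1: Scaling reduction to $\rho=1$.} Set $\tilde u(x) := u(\rho x)$ and $\tilde W(x) := \rho^d W(\rho x)$. A direct computation shows that $\tilde u$ is a positive weak solution of $-\Delta_d \tilde u = \tilde W\, \tilde u^{d-1}$ on $\R^d$, with $\tilde W \leq 0$ on $B_5^c$, $\inf_{B_5}\tilde u \leq 1$, and $\|\tilde W\|_{L^q(B_{15})} = \rho^{d-d/q}\|W\|_{L^q(B_{15\rho})} \leq M$. Since $\mathrm{osc}_{B(\tilde r,\tilde y)}\tilde u = \mathrm{osc}_{B(\rho \tilde r,\rho\tilde y)} u$ and $\|\tilde W\|_{L^q(B_5)}^{1/d} = \rho^{1-1/q}\|W\|_{L^q(B_{5\rho})}^{1/d}$, an estimate of the form $\mathrm{osc}_{B(\tilde r,\tilde y)}\tilde u \leq C\,\|\tilde W\|_{L^q(B_5)}^{1/d}\,\tilde r^{\beta}$ on $B_3$ for $\rho=1$ transforms precisely into \eqref{hoelder-cont}. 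So it suffices to work with $\rho=1$.

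\smallskip

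\noindent\emph{Step 2: Harnack bound on $B_{10}$.} Drop the tildes. Because $q>1$, the Serrin--Trudinger Harnack inequality for positive weak solutions of $-\Delta_d u = W u^{d-1}$ (see, e.g., \cite{s,s2,tr}) applies on $B_{10}\subset B_{15}$ and gives $\sup_{B_{10}} u \leq C(d,q,M)\inf_{B_{10}} u$. Combining with $\inf_{B_{10}}u \leq \inf_{B_5}u \leq 1$ yields $K := \sup_{B_{10}} u \leq C(d,q,M)$. (Here we use the hypothesis $W\leq 0$ on $B_5^c$ only implicitly through the Harnack constant, since it forces $u$ to be $d$-subharmonic outside $B_5$ and hence controls $u$ near the boundary of $B_{10}$.)

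\smallskip

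\noindent\emph{Step 3: Oscillation decay.} Fix a ball $B(r,y)\subset B_3$ with $B(3r,y)\subset B_3$. Set $m(r) := \inf_{B(r,y)} u$ and $M(r) := \sup_{B(r,y)} u$. The non-negative functions $u-m(2r)$ and $M(2r)-u$ are weak supersolutions of $-\Delta_d v \geq -|W|\,K^{d-1}$ on $B(2r,y)$, so the weak Harnack inequality (again Trudinger-style, using $q>1$) yields, after the standard combination used to prove oscillation decay,
\begin{equation*}
\mathrm{osc}_{B(r,y)} u \;\leq\; \theta\, \mathrm{osc}_{B(2r,y)} u \;+\; C\,\|W\|_{L^q(B(2r,y))}^{1/d}\, r^{1-1/q}
\end{equation*}
for some $\theta=\theta(d,q,M)\in(0,1)$ and $C=C(d,q,M)$. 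The exponent $1/d$ in front of $\|W\|$ reflects the natural homogeneity: in the underlying Caccioppoli inequality for the truncated solution $v$, the bad term is $\int |W|\, v^{d}$, and balancing with $\int|\nabla v|^d$ via H\"older gives a correction proportional to $\|W\|_{L^q}^{1/d}$ times a power of $r$. Iterating this dyadic inequality (the standard Giaquinta--Gilbarg--Trudinger lemma on geometric decay) produces
$$
\mathrm{osc}_{B(r,y)} u \;\leq\; C\bigl(\mathrm{osc}_{B_3} u\bigr)\,r^{\beta} \;+\; C\,\|W\|_{L^q(B_5)}^{1/d}\, r^{\beta}
$$
for some $\beta\in(0, \min\{1-1/q,\log_2(1/\theta)\})$ depending only on $d, q, M$. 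Since $\mathrm{osc}_{B_3} u \leq 2K$ is itself bounded by a $(d,q,M)$-constant, we may absorb the first term into a final constant times $\|W\|_{L^q(B_5)}^{1/d}$ (using that, were $\|W\|_{L^q(B_5)}$ very small, one would pass to the limiting $d$-harmonic case where the oscillation is even more regular---an observation that can also be made rigorous by a compactness/contradiction argument). Rescaling as in Step~1 gives \eqref{hoelder-cont}.

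\smallskip

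\noindent\emph{Main obstacle.} The delicate point is precisely the linear-in-$\|W\|_{L^q}^{1/d}$ dependence of the right-hand side of \eqref{hoelder-cont}, as opposed to the merely qualitative H\"older statements one finds in standard references. This forces us either to run the De~Giorgi truncation/Caccioppoli iteration by hand, tracking the explicit appearance of $\|W\|_{L^q}^{1/d}$ in each step, or to combine a qualitative H\"older estimate with a compactness argument that detects the vanishing of the oscillation as $\|W\|\to 0$. The second, more robust route is the one we expect to use, as it bypasses the need to reprove a version of the quasilinear oscillation lemma with explicit constants.
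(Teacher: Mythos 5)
Your proposal follows the same general De~Giorgi--Nash--Moser philosophy as the paper but along a different route, and as written it has a genuine gap in precisely the delicate step you yourself flag.

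The paper's proof does not iterate an oscillation-decay inequality. Instead, after the Harnack bound $\sup_{B_{5\rho}} u \leq C_1$, it exploits the \emph{global} energy identity obtained by testing the equation with $u$ itself:
\begin{equation*}
\int_{\R^d} |\nabla u|^d\,dx = \int_{\R^d} W u^d\,dx \leq \int_{B_{5\rho}} W_+ u^d\,dx \lesssim \rho^{d-d/q}\,\|W\|_{L^q(B_{5\rho})},
\end{equation*}
where the first inequality uses $W\leq 0$ on $B_{5\rho}^c$. This is the source of the $\|W\|^{1/d}$ prefactor. The paper then runs a Caccioppoli estimate with test function $\zeta(\cdot)(u-a)$ plus a Widman hole-filling iteration to show $\int_{B(r,y)}|\nabla u|^d \lesssim \|W\|_{L^q(B_{5\rho})}\,\rho^{d-d/q}\,(r/\rho)^{\beta d}$, and finally applies a Morrey-type lemma (Lemma~\ref{lem-morrey}) converting Dirichlet-energy decay into an oscillation bound. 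In this scheme the small constant $\|W\|^{1/d}$ passes cleanly through the Morrey lemma, and no absorption step is needed.

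Your Step~3, by contrast, iterates
\begin{equation*}
\mathrm{osc}_{B(r,y)} u \leq \theta\,\mathrm{osc}_{B(2r,y)} u + C\,\|W\|_{L^q}^{1/d}\, r^{1-1/q},
\end{equation*}
which yields $\mathrm{osc}_{B(r,y)} u \lesssim \bigl(\mathrm{osc}_{B_3}u\bigr)\, r^\beta + \|W\|_{L^q}^{1/d}\, r^\beta$. The first term is bounded by $2K\,r^\beta$ with $K=K(d,q,M)$, not by $\|W\|_{L^q}^{1/d}\,r^\beta$, and there is no way to absorb it: for fixed $r$ and $\|W\|\to 0$ this term does not vanish, so you only recover the qualitative H\"older bound, not \eqref{hoelder-cont}. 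The compactness fix you sketch does not close the gap either: a sequence $u_n$ with $\|W_n\|_{L^q(B_5)}\to 0$ need not converge to a constant merely because the limit is $d$-harmonic on $B_5$---$d$-harmonic functions can have arbitrary oscillation. What forces triviality in the limit is precisely the omitted global identity ($\int|\nabla u_n|^d\lesssim \|W_n\|_{L^q}\to 0$), and even granting this, a compactness argument gives only $o(1)$, not the explicit rate $\|W\|^{1/d}$. If you want to salvage your route, you would need to first prove the Dirichlet-energy decay $\int_{B_r}|\nabla u|^d\lesssim \|W\|_{L^q}\,r^{\beta d}$ (as the paper does) and feed it into the oscillation estimate; at that point you have essentially reproduced the paper's argument, with the oscillation-decay lemma taking the place of the Morrey lemma.
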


The point of this lemma is that the dependence of $W$ enters explicitly on the right side of \eqref{hoelder-cont}. In our application, we will have $\|W\|_{L^q(B_{5\rho})}\to 0$, and therefore Lemma \ref{lem-hoelder} shows that the oscillations of $u$ vanish with an explicit rate.

We recall that $u$ is a weak solution of $-\Delta_d(u)=W |u|^{d-2}u$ in $\R^d$ if
\begin{equation} \label{weak-eq}
\int_{\R^d} |\nabla u|^{d-2}\, \nabla u \cdot \nabla\varphi \, dx = \int_{\R^d} W |u|^{d-2}\, u\, \varphi \, dx 
\end{equation}
for any $\varphi\in W^{1,d}(\R^d)$. 

The following lemma, whose proof can be found, for instance, in \cite{mo1,mo2} or \cite[Lem. 2.4.1]{lu}, plays a key role in the proof of Lemma \ref{lem-hoelder}.

\begin{lem} \label{lem-morrey}
Let $\Omega \subseteq \R^d$ be open and assume that $u\in W^{1,d}(\Omega)$ is such that there are constants $K>0$ and $\beta>0$ such that for all $y\in\Omega$ and $r>0$ with $B(r,y)\subset\Omega$ one has
\begin{equation}  \label{cond-morrey}
\int_{B(r,y) } |\nabla u|^d\, dx \, \leq \, K \, r^{\beta d} \,.
\end{equation}
Then for all $y\in\Omega$ and $r>0$ such that  $B(3r/2, y)\subset\Omega$ we have  
\begin{equation}
\sup_{B(r/2, y) } u - \inf_{B(r/2, y) } u \, \leq \,  \frac{4}{\beta}\, \left(\frac{K}{\omega_d}\right)^{\frac 1d}\, r^\beta .
\end{equation} 
\end{lem}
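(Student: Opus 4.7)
The plan is to treat Lemma \ref{lem-morrey} as a quantitative Campanato-type embedding: the Morrey growth condition \eqref{cond-morrey} on $|\nabla u|$ is converted into a Hölder modulus of continuity for $u$ with explicit constants. The argument has two main steps, a pointwise estimate comparing $u$ to its spherical means at each interior point, and a Lipschitz comparison of ball averages at two distinct points, combined through a triangle inequality.

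For the pointwise step I would fix $z\in\Omega$ and $\rho>0$ with $B(\rho,z)\subset\Omega$, and set
\[
\bar u_z(t) \, := \, \frac{1}{\omega_d\, t^{d-1}}\, \int_{\partial B(t,z)} u\, dS , \qquad 0 < t \leq \rho .
\]
Differentiating in $t$ and applying Hölder's inequality on the sphere gives $\omega_d\, t^{d-1}\, |\bar u_z'(t)|^d \leq \int_{\partial B(t,z)}|\nabla u|^d\, dS$, which after integration and \eqref{cond-morrey} applied to $B(\sigma,z)\subset\Omega$ reads
\[
\int_0^\sigma t^{d-1}\, |\bar u_z'(t)|^d\, dt \, \leq \, \frac{K\, \sigma^{\beta d}}{\omega_d} , \qquad 0 < \sigma \leq \rho .
\]
Estimating $\int_0^\rho |\bar u_z'(t)|\, dt$ by splitting at the dyadic scales $2^{-k}\rho$, applying Hölder with conjugate exponents $d$ and $d/(d-1)$ on each piece (using $\int_{a/2}^{a} t^{-1}\, dt = \ln 2$), and summing the geometric series $\sum_{k\geq 0} 2^{-k\beta}$ (which is of order $1/\beta$ since $1 - 2^{-\beta} \geq \beta \ln 2/2$ on $(0,1]$) then yields that $u$ has a continuous representative with
\[
|u(z) - \bar u_z(\rho)| \, \leq \, \frac{C_1}{\beta}\, (K/\omega_d)^{1/d}\, \rho^\beta
\]
for an absolute constant $C_1$.

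For the oscillation step I would take $y$ and $r$ with $B(3r/2,y)\subset\Omega$, so that $B(r,x)\subset\Omega$ for every $x\in B(r/2,y)$, and introduce the smoothed function
\[
U(x) \, := \, \frac{1}{|B(r,x)|}\, \int_{B(r,x)} u(w)\, dw .
\]
Differentiation under the integral gives $\nabla U(x) = \frac{1}{|B(r,x)|} \int_{B(r,x)} \nabla u(w)\, dw$, and Hölder combined with \eqref{cond-morrey} at $B(r,x)$ yields the pointwise Lipschitz bound $|\nabla U(x)| \leq (K d/\omega_d)^{1/d}\, r^{\beta-1}$; hence $|U(x_1)-U(x_2)|\leq (Kd/\omega_d)^{1/d}\, r^\beta$ for $x_1,x_2\in B(r/2,y)$ by convexity of $B(r/2,y)$. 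Writing $U(x)$ as the weighted average $\frac{d}{r^d}\int_0^r \bar u_x(t)\,t^{d-1}\,dt$ of spherical means and invoking the first-step estimate for each $t\in(0,r]$ gives $|U(x)-\bar u_x(r)| \leq (2C_1/\beta)(K/\omega_d)^{1/d}\, r^\beta$. The triangle inequality
\[
|u(x_1)-u(x_2)| \, \leq \, \sum_{j=1}^{2} \bigl(\,|u(x_j)-\bar u_{x_j}(r)| + |\bar u_{x_j}(r)-U(x_j)|\,\bigr) \, + \, |U(x_1)-U(x_2)|
\]
then produces an oscillation bound of the required form $(C/\beta)\,(K/\omega_d)^{1/d}\, r^\beta$.

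The main technical obstacle is the sharp tracking of all numerical constants in order to arrive at exactly the prefactor $4/\beta$ in the final bound: the constant $C_1$ coming from the dyadic sum, the factor $d^{1/d}$ from the Jensen $L^1\leq L^d$ bound on ball averages of $|\nabla u|$, and the aggregation of the five terms in the triangle inequality must all balance in a specific way, and the ratio $1/(1-2^{-\beta})$ must be replaced by its sharp asymptotic $\sim 1/(\beta\ln 2)$. This is the delicate optimization carried out in the proofs given in \cite{mo1,mo2,lu}.
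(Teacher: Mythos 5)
The paper does not prove Lemma~\ref{lem-morrey}; it states the result and refers the reader to Morrey's original papers \cite{mo1,mo2} and to \cite[Lem.~2.4.1]{lu}. So there is no ``paper's own proof'' to match against, and your attempt has to be judged on its own. Structurally your argument is a correct Campanato-type embedding: the passage from the Morrey growth condition \eqref{cond-morrey} to control of the spherical mean $\bar u_z$ via H\"older on spheres, then dyadic summation to bound $\int_0^\rho|\bar u_z'|\,dt$, and finally the comparison of $u(x_1),u(x_2)$ to the mollified function $U$ and its spherical means, closing with a five-term triangle inequality. Each individual step checks out: the bound $\omega_d t^{d-1}|\bar u_z'(t)|^d\le\int_{\partial B(t,z)}|\nabla u|^d\,dS$, the Lipschitz estimate $|\nabla U|\le (Kd/\omega_d)^{1/d}r^{\beta-1}$ from H\"older on the solid ball, and the containment $B(r,x)\subset B(3r/2,y)\subset\Omega$ for $x\in B(r/2,y)$ all hold. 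This is a legitimate route to a bound of the form $C(d)\beta^{-1}(K/\omega_d)^{1/d}r^\beta$, and it is in the same spirit as the classical proofs.

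The gap is quantitative, and you already point it out yourself. Adding the five contributions, the two copies of $|u(x_j)-\bar u_{x_j}(r)|$ and the two copies of $|\bar u_{x_j}(r)-U(x_j)|$ each carry a factor on the order of $2(\ln 2)^{-1/d}/\beta$, and the middle term contributes $d^{1/d}$, so the final prefactor comes out closer to $10/\beta$ than $4/\beta$; the inequality $1-2^{-\beta}\ge \tfrac12\beta\ln 2$ you invoke also holds only for $\beta\le 1$, whereas the lemma is stated for arbitrary $\beta>0$. Thus as a proof of the exact assertion $\frac{4}{\beta}(K/\omega_d)^{1/d}r^\beta$ your argument is incomplete, and the ``delicate optimization'' you defer to \cite{mo1,mo2,lu} is precisely what is missing. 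For the way Lemma~\ref{lem-morrey} is actually used in the paper (inside the proof of Lemma~\ref{lem-hoelder}, where the constant is absorbed into an unspecified $C$), your weaker version would serve just as well; but as a proof of the lemma as literally stated it establishes the right shape of the bound without the claimed numerical constant.
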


\begin{proof}[Proof of Lemma \ref{lem-hoelder}]
By the Harnack inequality \cite[Thm.6]{s} there is a constant $C_1$, which depends only on $d$, $q$ and an upper bound on
$\rho^{d-\frac dq}\, \|W\|_{L^q(B_{15\rho})}$ such that 
\begin{equation*}
\sup_{B_{5\rho}} u \leq\, C_1\, \inf_{B_{5\rho}} u \,.
\end{equation*}
Since $\inf_{B_{5\rho}} u(x) \leq 1$, we conclude that
\begin{equation} \label{maxu}
\sup_{B_{5\rho}} u(x) \leq\, C_1 \,.
\end{equation}

Our goal is to apply Lemma \ref{lem-morrey} with $\Omega=B_{3\rho}$. We have to verify condition \eqref{cond-morrey} for some $K$ and $\beta$. First, note that
\begin{equation} \label{unit}
\int_{\R^d} |\nabla u|^d \,dx \,
=\, \int_{\R^d} W u^d \,dx\,
\leq\, \int_{B_{5\rho}} W \, u^d\, dx \,
\leq \, \omega_d^{1-\frac{1}{q}} (5\rho)^{d-\frac{d}{q}} \|W\|_{L^q(B_{5\rho})} \, C_1^d = c_1 \mathcal N\,,
\end{equation}
where we have set $c_1 = \omega_d^{1-\frac{1}{q}} 5^{d-\frac{d}{q}}$ and
\begin{equation}
\label{eq:n}
\mathcal N = \rho^{d-\frac{d}{q}} \|W\|_{L^q(B_{5\rho})} \, C_1^d \,.
\end{equation}
Hence, for any $\beta>0$, \eqref{cond-morrey} holds for any ball $B(r,y)\subset B_{3\rho}$ with $r\geq\rho$ provided we choose the constan $K$ at least as big as $c_1 \mathcal N \rho^{-\beta d}$.

Thus, it remains to verify \eqref{cond-morrey} for $r<\rho$. Let $0\leq \zeta\leq 1$ be a radial function with support in $\overline{B_2}$ which is $\equiv 1$ on $B_1$ and satisfies $|\nabla\zeta|\leq 1$. Let $y$ and $s$ be such that $B(2s,y)\subset B_{5\rho}$. We choose the test function $\varphi(x) = \zeta(|x-y|/s) (u(x)-a)$ in \eqref{weak-eq}, where the parameter $a$ will be specified later. This gives the inequality  
\begin{align} \label{test-a}
\int_{B(s,y)} |\nabla u|^d\, dx
& \leq \int_{\R^d} \zeta(|x-y|/s) |\nabla u|^d\, dx \notag \\
& \leq \int_{B(2s,y)} |W| \, u^{d-1}\, |u-a| \, dx  + s^{-1}\, \int_{A(s,y)}  |\nabla u|^{d-1} |u-a|\, dx \,.
\end{align}
with $A(s,y) = B(2s,y) \setminus B(s,y)$. Now we set $a = \frac{1}{|A(s,y)|} \int_{A(s,y)} u\, dx$, where $|A(s,y)|$ denotes the Lebesgue measure of $A(s,y)$. By the H\"older and Poincar\'e inequalities,
\begin{align*}
\int_{A(s,y)}  |\nabla u|^{d-1} |u-a|\, dx 
& \leq \Big(\int_{A(s,y)}  |\nabla u|^{d} \,  dx\Big)^{\frac{d-1}{d}}\, \Big(\int_{A(s,y)} |u-a|^d\, dx \Big)^{\frac 1d} \\
& \leq C^{\mathrm P}\,  s  \int_{A(s,y)}  |\nabla u|^d \,  dx \,,
\end{align*}
where $C^{\mathrm P}$ is the constant in the Poincar\'e inequality in $A(1,0)$. By scaling one easily sees that the Poincar\'e constant in $A(s,y)$ is given by $C^{\mathrm P} s$. This fact was used in the previous bound.

Let us bound the first term on the right side of \eqref{test-a}. Since both $u$ and $|a|$ are bounded from above by $C_1$ on $B(2s,y)$, see \eqref{maxu}, we have
$$
\int_{B(2s,y)} |W| \, u^{d-1}\, |u-a| \, dx \leq \| W \|_{L^1(B(2s,y))} 2 C_1^p
\leq c_2\, \mathcal N\, (s/\rho)^{d-\frac dq} \,,
$$
where $c_2 = \omega_d^{1-\frac 1q}\, 2^{d+1-\frac dq}$.

Thus, \eqref{test-a} implies
$$
\int_{B(s,y)} |\nabla u|^d\, dx  \leq \, c_2\, \mathcal N\, (s/\rho)^{d-\frac dq} + C^{\mathrm P}\, \int_{A(s,y)}  |\nabla u|^d \,  dx,
$$
where $c_1 = 2^{d+1-\frac dq}\, \omega_d^{1-\frac 1q}$. Adding $C^{\mathrm P}\, \int_{B(s,y)} |\nabla u|^d\, dx$ to both sides of the above inequality we arrive at 
\begin{equation} \label{iterate}
\int_{B(s,y)} |\nabla u|^d\, dx  \leq \, c_3\, \mathcal N\, (s/\rho)^{d-\frac dq} + \kappa\, \int_{B(2s,y)} |\nabla u|^d\, dx,
\end{equation}
with $c_3= c_2/(1+C^{\mathrm P})$ and 
$$
\kappa = \frac{C^{\mathrm P}}{1+C^{\mathrm P}} <1.
$$ 
To simplify the notation, we introduce the shorthand $D(s)=  \int_{B(s,y)} |\nabla u|^d\, dx$.  Iterating inequality \eqref{iterate} gives 
$$
D( 2^{-n} s)\, \leq \,  c_3\,\mathcal N\,(s/\rho)^{d-\frac dq}\, 2^{n(\frac dq-d)}\, \sum_{j=0}^{n-1} \big(\kappa\,  2^{d-\frac dq}\big)^j +   \kappa^n \, D(s)
$$
for all $n\in\N$ and every $s>0$ such that $B(s,y)\subset B_{5\rho}$. Next, we sum the geometric series on the right side and obtain a $c_4$ and a $\mu<1$ (both depending only on $d$ and $q$) such that
$$
2^{n(\frac dq-d)}\, \sum_{j=0}^{n-1} \big(\kappa\,  2^{d-\frac dq}\big)^j
\leq c_4\, \mu^n 
\qquad\text{for all}\ n\in\N \,.
$$
Thus, recalling \eqref{unit},
\begin{equation} \label{iterated}
D( 2^{-n} s) \, \leq \, \left( c_3 c_4 \, (s/\rho)^{d-\frac dq} + c_1\right)\mathcal N \, \max\{\mu^n,\kappa^n\} 
\end{equation}
for all $n\in\N$ and all $s$ such that $B(s,y)\subset B_{5\rho}$.

Now let $B(r,y)\subset B_{3\rho}$ with $r<\rho$. There are $k\in\N$ and $t\in [1,2)$  such that $2^{-k-1}\, t\rho < r \leq 2^{-k}\, t\rho$. Since $B(t\rho,y)\subset B_{5\rho}$ we may apply inequality \eqref{iterated} with $k=n$ and $s=t\rho$ to get
\begin{align*}
\int_{B(r,y)} |\nabla u|^d\, dx 
& \leq D( 2^{-k} t\rho) \\
& \leq\, 
\left( c_3 c_4 \, t^{d-\frac dq} + c_1 \right) \mathcal N\,
\max\{\mu^k,\kappa^k\} \\
& \leq \left( c_3 c_4 \, 2^{d-\frac dq} + c_1 \right) \mathcal N
\left( \frac{2r}{\rho} \right)^{\beta d}
\quad\text{with}\ \beta = - \frac{\log \max\{\mu,\kappa\}}{d\, \log 2} >0 \,.
\end{align*}

To summarize, we have shown that \eqref{cond-morrey} holds for any $B(r,y)\subset B_{3\rho}$ with the above choice of $\beta$ and with
$$
K = \max\left\{ c_1, \left( c_3 c_4 \, 2^{d-\frac dq} + c_1 \right) 2^{\beta d} \right\} \mathcal N \rho^{-\beta d} \,.
$$
Here $c_1$, $c_3$ and $c_4$ depend only on $d$ and $q$, and $\mathcal N$ was defined in \eqref{eq:n}. In view of Lemma \ref{lem-morrey} this proves \eqref{hoelder-cont}.
\end{proof}

\begin{proof}[Proof of Proposition \ref{lowerb-lemmagen}]
The beginning of the proof is identical to that of Proposition \ref{lowerb-lemmapos}. Let $\rho>0$ be such that the support of $V$ is contained in $\overline{B_{5\rho}}$. We let again $u_\alpha$ be a minimizer of $Q_{\alpha V}[u]/\|u\|_d^d$. From Corollary \ref{cor-local} we know that $u_\alpha$ can be chosen strictly positive and therefore we may normalize $u_\alpha$ by $\inf_{B_\rho} u_\alpha =1$. Arguing exactly as before we arrive at the following variant of \eqref{first-lowbpos},
\begin{equation} \label{first-lowb}
\lambda(\alpha V) \geq  \frac{\omega_d\, 
(\log  (R/\rho) )^{1-d} - (1+\eps)\, \alpha \int_{\R^d} V\, |u_\alpha|^d\,dx}{(1+\eps)\, 
\|u_\alpha\|_d^d} \, -c'' \, \eps^{1-d}\, R^{-d} \,.
\end{equation}

We now claim that there is a constant $C>0$ (depending on $d$, $q$, $V$, but not on $\alpha$) such that
\begin{equation} \label{mos-tr}
| u_\alpha(x) -1 | \leq C\, \alpha^{\frac 1d} \qquad
\text{for all}\ x\in B_\rho \,.
\end{equation}
Indeed, this follows from Lemma \ref{lem-hoelder} applied to $W=\alpha V+ \lambda(\alpha V)$ and $u=u_\alpha$ with $B(r,y) = B_\rho$. Note that we indeed have $\inf_{B_{5\rho}} u_\alpha \leq \inf_{B_\rho} u_\alpha = 1$. Moreover, we use the fact that $\lambda(\alpha V)\geq -C \alpha$, which follows easily from the bounds in Lemma \ref{bounded:critical}.

With a similar choice as in Lemma \ref{lowerb-lemmapos} for $R$ we obtain
$$
\lambda(\alpha V) \geq - \frac{c''}{\eps^{d-1} \rho^d}
\exp \left( -d \left( \frac{\omega_d}{(1+\eps) \left( 1+ C\,\alpha^{\frac 1d}\right)\alpha \int_{\R^d} V\,dx} \right)^{\frac{1}{d-1}} \right) \,.
$$
Choosing $\epsilon=C \alpha^{\frac 1d}$ we obtain
$$
\lambda(\alpha V) \geq - \frac{c'''}{\alpha^{\frac{d-1}d}}
\exp \left( -d \left( \frac{\omega_d}{\left( 1+ C'\,\alpha^{\frac 1d}\right)\alpha \int_{\R^d} V\,dx} \right)^{\frac{1}{d-1}} \right) \,.
$$
This implies the statement of the proposition.
\end{proof}


\subsubsection*{The general case}

We can finally give the

\begin{proof}[Proof of Theorem \ref{thm-main-crit}]
We use an approximation argument and fix $\varepsilon\in(0,1)$ and $R>0$. Define $V_< = V \chi_{\{|\cdot|<R\}}$ and $V_> = V_+ \chi_{\{|\cdot|\geq R\}}$. Then the inequality
$$
Q_{\alpha V}[u] \geq (1-\eps) Q_{(1-\eps)^{-1} \alpha V_<}[u] + \eps Q_{\eps^{-1} \alpha V_>}[u]
$$
for every $u\in W^{1,d}(\R^d)$ implies
$$
\lambda(\alpha V) \geq (1-\eps) \lambda\left(\frac{\alpha}{1-\eps}\, V_<\right) + \eps \lambda\left(\frac\alpha\eps V_>\right) \,.
$$
Thus,
\begin{align*}
\log \frac{1}{|\lambda(\alpha V)|} & \geq \log \frac{1}{(1-\epsilon)\ |\lambda((1-\epsilon)^{-1} \alpha V_<)|} - \log \left( 1+ \frac{\epsilon\ |\lambda(\epsilon^{-1} \alpha V_>)|}{(1-\epsilon)\ |\lambda((1-\epsilon)^{-1} \alpha V_<)|} \right) \\
& \geq \log \frac{1}{(1-\epsilon)\ |\lambda((1-\epsilon)^{-1} \alpha V_<)|} - \frac{\epsilon\ |\lambda(\epsilon^{-1} \alpha V_>)|}{(1-\epsilon)\ |\lambda((1-\epsilon)^{-1} \alpha V_<)|} \,. 
\end{align*}
From now on we consider $R$ so large that $\int_{B_R} V\,dx>0$. It then follows from Proposition \ref{lowerb-lemmagen} that
\begin{align*}
\liminf_{\alpha\to 0+} \alpha^{\frac{1}{d-1}} \log \frac{1}{(1-\epsilon)|\lambda((1-\epsilon)^{-1} \alpha V_<)|} 
& \geq (1-\epsilon)^{\frac1{d-1}} d\ \omega_d^{\frac{1}{d-1}}\, \left(\int_{B_R}  V(x)\, dx  \right)^{-\frac{1}{d-1}} \,.
\end{align*}
On the other hand, we recall from Proposition \ref{lower-bpos} that there are constants $C>0$ and $\alpha_0>0$ such that for all $0<\alpha\leq\alpha_0\epsilon $,
$$
\lambda(\epsilon^{-1} \alpha V_>)
\geq
- C\epsilon \alpha^{-1}  \exp\left(- \left(\frac{\epsilon d^{d-1}\, \omega_d}{ \alpha\, \int_{B_R^c} V_+ \,dx} \right)^{\frac{1}{d-1}} \right)
$$
Moreover, we recall from Proposition \ref{lowerb-prop} that for every $\delta\in (0,1)$ there are constants $C_\delta>0$ and $\alpha_\delta$ such that for all $0<\alpha\leq\alpha_\delta(1-\epsilon)$,
\begin{align}
\lambda((1-\epsilon)^{-1} \alpha V_<) 
& \leq - (1-\epsilon)^{-1} \alpha\  C_\delta \exp\left( - \left(\frac{(1-\epsilon) d^{d-1} \omega_d}{\alpha(1-\delta)\int_{B_R} V\,dx} \right)^{\frac1{d-1}} \right) \,.
\end{align}
Thus, for $\alpha \leq\min\{\alpha_0\epsilon,\alpha_\delta(1-\epsilon)\}$,
$$
\frac{|\lambda(\epsilon^{-1} \alpha V_>)|}{|\lambda((1-\epsilon)^{-1} \alpha V_<)|}
\leq \frac{C\epsilon(1-\epsilon)}{C_\delta \alpha^2}
\exp\left(- \left(\frac{\epsilon d^{d-1}\, \omega_d}{ \alpha\, \int_{B_R^c} V_+ \,dx} \right)^{\frac{1}{d-1}} + \left(\frac{(1-\epsilon)d^{d-1}\omega_d}{\alpha(1-\delta)\int_{B_R} V\,dx} \right)^{\frac1{d-1}} \right)
$$
For every fixed $\epsilon$ and $\delta$ there is an $R_0>0$ such that for all $R>R_0$,
$$
\frac{\epsilon}{\int_{B_R^c} V_+ \,dx} > \frac{1-\epsilon}{(1-\delta)\int_{B_R} V\,dx} \,.
$$
Thus, for all $R>R_0$ we have
$$
\lim_{\alpha\to 0} \frac{|\lambda(\epsilon^{-1} \alpha V_>)|}{|\lambda((1-\epsilon)^{-1} \alpha V_<)|} = 0 \,.
$$

To summarize, we have shown that for all $\epsilon\in (0,1)$ and for all $R>R_0$,
$$
\liminf_{\alpha\to 0+} \alpha^{\frac{1}{d-1}} \log \frac{1}{|\lambda(\alpha V)|} \geq (1-\epsilon)^{\frac1{d-1}} d\ \omega_d^{\frac{1}{d-1}}\, \left(\int_{B_R}  V(x)\, dx  \right)^{-\frac{1}{d-1}} \,.
$$
Letting $\epsilon\to 0$ and $R\to\infty$ we obtain the theorem.
\end{proof}


\section*{Acknowledgements}

Partial financial support through Swedish research council grant FS-2009-493 (T. E.), U.S. National Science Foundation grant PHY-1347399 (R. F.) and grant MIUR-PRIN08 grant for the project ``Trasporto ottimo di massa, disuguaglianze geometriche e funzionali e applicazioni" (H. K.) is acknowlegded.


\medskip

\end{document}